\newtheorem{theorem}{Theorem}
\newtheorem{proposition}[theorem]{Proposition}
\newtheorem{remark}[theorem]{Remark}
\numberwithin{theorem}{section}
\numberwithin{equation}{section}
\begin{document}

\makeatletter
\def\Ddots{\mathinner{\mkern1mu\raise\p@
\vbox{\kern7\p@\hbox{.}}\mkern2mu
\raise4\p@\hbox{.}\mkern2mu\raise7\p@\hbox{.}\mkern1mu}}
\makeatother

\newcommand{\OP}[1]{\operatorname{#1}}
\newcommand{\GO}{\OP{GO}}
\newcommand{\leftexp}[2]{{\vphantom{#2}}^{#1}{#2}}
\newcommand{\leftsub}[2]{{\vphantom{#2}}_{#1}{#2}}
\newcommand{\rightexp}[2]{{{#1}}^{#2}}
\newcommand{\rightsub}[2]{{{#1}}_{#2}}
\newcommand{\AI}{\OP{AI}}
\newcommand{\gen}{\OP{gen}}
\newcommand{\prim}{\OP{\star prim}}
\newcommand{\Image}{\OP{Im}}
\newcommand{\Spec}{\OP{Spec}}
\newcommand{\Ad}{\OP{Ad}}
\newcommand{\tr}{\OP{tr}}
\newcommand{\spec}{\OP{spec}}
\newcommand{\scopy}{\OP{end}}
\newcommand{\ord}{\OP{ord}}
\newcommand{\Cent}{\OP{Cent}}
\newcommand{\wellip}{\OP{w-ell}}
\newcommand{\Nrd}{\OP{Nrd}}
\newcommand{\Res}{\OP{Res}}
\newcommand{\alg}{\OP{alg}}
\newcommand{\LCM}{\OP{LCM}}
\newcommand{\lcm}{\OP{lcm}}
\newcommand{\sgn}{\OP{sgn}}
\newcommand{\SU}{\OP{SU}}
\newcommand{\Hom}{\OP{Hom}}
\newcommand{\Inter}{\OP{Int}}
\newcommand{\diag}{\OP{diag}}
\newcommand{\Sym}{\OP{Sym}}
\newcommand{\GSp}{\OP{GSp}}
\newcommand{\GL}{\OP{GL}}
\newcommand{\GSO}{\OP{GSO}}
\newcommand{\height}{\OP{ht}}
\newcommand{\erf}{\OP{erf}}
\newcommand{\vol}{\OP{vol}}
\newcommand{\cusp}{\OP{cusp,\tau}}
\newcommand{\un}{\OP{un}}
\newcommand{\disci}{\OP{disc,\tau_{\it i}}}
\newcommand{\cuspi}{\OP{cusp,\tau_{\it i}}}
\newcommand{\ellip}{\OP{ell}}
\newcommand{\sph}{\OP{sph}}
\newcommand{\gsimp}{\OP{sim-gen}}
\newcommand{\Aut}{\OP{Aut}}
\newcommand{\disc}{\OP{disc,\tau}}
\newcommand{\sdisc}{\OP{s-disc}}
\newcommand{\aut}{\OP{aut}}
\newcommand{\End}{\OP{End}}
\newcommand{\barQ}{\OP{\overline{\mathbf{Q}}}}
\newcommand{\barQp}{\OP{\overline{\mathbf{Q}}_{\it p}}}
\newcommand{\Gal}{\OP{Gal}}
\newcommand{\GCD}{\OP{GCD}}
\newcommand{\simp}{\OP{sim}}
\newcommand{\pri}{\OP{prim}}
\newcommand{\Normal}{\OP{Norm}}
\newcommand{\Ind}{\OP{Ind}}
\newcommand{\St}{\OP{St}}
\newcommand{\unit}{\OP{unit}}
\newcommand{\reg}{\OP{reg}}
\newcommand{\SL}{\OP{SL}}
\newcommand{\Frob}{\OP{Frob}}
\newcommand{\Id}{\OP{Id}}
\newcommand{\GSpin}{\OP{GSpin}}
\newcommand{\Norm}{\OP{Norm}}

\keywords{Pseudorandom Vectors, Elliptic Curves, Finite Fields, Wiener Process, Monte Carlo Methods}
\subjclass[2010]{11K45, 65C10, 65C05}

\title[Pseudorandom Vector Generation]{Pseudorandom Vector Generation Using Elliptic Curves And Applications to Wiener Processes}
\author{Chung Pang Mok}

\address{School of Mathematical Sciences, Soochow University, 1 Shi-Zhi Street, Suzhou 215006, Jiangsu Province, China}

\email{zpmo@suda.edu.cn}

\maketitle

\begin{abstract}
In this paper we present, using the arithmetic of elliptic curves over finite fields, an algorithm for the efficient generation of a sequence of uniform pseudorandom vectors in high dimensions, that simulates a sample of a sequence of i.i.d. random variables, with values in the hypercube $[0,1]^d$ with uniform distribution. As an application, we obtain, in the discrete time simulation, an efficient algorithm to simulate, uniformly distributed sample path sequence of a sequence of independent standard Wiener processes. This could be employed for use, in the full history recursive multi-level Picard approximation method, for  numerically solving the class of semilinear parabolic partial differential equations of the Kolmogorov type.
\end{abstract}

\section{Introduction}
In numerical integration via the Monte Carlo method, and in the simulation of stochastic processes,  an important role is played by the generation of pseudorandom numbers, and other more general pseudorandom variates. Arguably the most fundamental one is that of  sequence of uniform pseudorandom numbers in the unit interval $[0,1]$, that simulates a sample of a sequence of independent identically distributed random variables, with values in $[0,1]$ with uniform distribution (recall that by the Weyl Criterion for uniform distribution plus the Strong Law of Large Numbers, for a sequence $\{X_n(\cdot)\}_{n \geq 0}$ of independent identically distributed random variables on a probability space $\Omega$, with values in $[0,1]$ with uniform distribution, a sample sequence $\{X_n(\omega)\}_{n \geq 0}$, for $\omega \in \Omega$, is almost surely a uniformly distributed sequence in $[0,1]$). The linear congruential generator is an efficient algorithm to generate such a sequence of uniform pseudorandom numbers. 
\bigskip

In section 2 of the paper, we are concerned with the generation of sequences of uniform pseudorandom vectors in high dimension, that simulate samples of a sequence of independent identically distributed random variables with values in the hypercube $[0,1]^d$ with uniform distribution. These can be generated by using the matrix version of the linear congruential generator; nevertheless, it is well known that, in the higher dimensional case, the sequence of pseudorandom vectors produced by using the linear congruential generator (or its matrix version thereof) could exhibit lattice structures, which sometimes make them not suitable for use in Monte Carlo simulations.
\bigskip

Nonlinear versions of congruential generators could be constructed using the arithmetic of finite fields, for instance the inversive congruential generator (see for example [Ni]); the sequence of pseudorandom vectors thus constructed is observed to be free of lattice structures in general. In this paper we present an algorithm, which relies on the arithmetic of elliptic curves over finite fields, to construct sequence of uniform  pseudorandom vectors. Whereas a finite field is uniquely determined up to isomorphism by its cardinality, one has an ample supply of isomorphism classes of elliptic curves over a given (large) finite field to work with, making it all the more appealing from the perspective of Monte Carlo methods. The algorithm is a variation of that of [Ha], [GBS], having origin in elliptic curve cryptography [Mi], [Kob], [Ka]. Since the applications we have in mind are mainly in Monte Carlo integration and simulation, we modify the original algorithm of {\it loc. cit.} concerning the way pseudorandom vectors are obtained as outputs from points on elliptic curve, in order to achieve high accuracy in these applications. 

\bigskip
In addition, in applications it is important to obtain sequence of pseudorandom vectors with long period. The Law of the Iterated Logarithm plus (multi-dimensional) discrepancy estimates allow us to quantify how long the period has to be, in order that the algorithm outputs good quality sequences of uniform pseudorandom vectors with strong pseudorandomness. The explicit form of the discrepancy estimates are given in section 2.3, in the case where the parameter $e$ of the algorithm is equal to $1$. The proofs of the explicit discrepancy estimates are given in the Appendix (which use the results of [He] and [KS]). We also give the analogue of the Hull-Dobell Theorem for our algorithm to yield the maximum period.

\bigskip

Either the inverse transform method or the Box-Muller method transforms a sequence of independent identically distributed random vectors in $[0,1]^d$ with uniform distribution, to a sequence of independent identically distributed Gaussian random vectors in $\mathbf{R}^d$ with standard normal distribution; in addition, by normalizing these Gaussian random vectors so that they lie on $S^{d-1}$, the $d-1$ dimensional sphere of radius one, what one obtains then is a sequence of independent identically distributed random vectors in $S^{d-1}$ with uniform distribution with respect to $S^{d-1}$ (the uniform measure of $S^{d-1}$ is being normalized so that the measure of $S^{d-1}$ is equal to one, i.e. a probability measure). Now we use the result of [CN], which says that the sequence of uniform probability measures on $S^{d-1}$ converges weakly to the Wiener measure as $d \rightarrow \infty$; more precisely they gave a nonstandard analysis interpretation of the ``Wiener sphere": essentially, by taking $d$ to be a {\it nonstandard infinite integer} $d_{ns}$, the uniform probability measure on $S^{d_{ns}-1}$ corresponds to the Wiener measure, and a random variable taking values in $S^{d_{ns}-1}$ with uniform distribution with respect to $S^{d_{ns}-1}$, corresponds to a standard Wiener process. Thus a sequence of independent identically distributed random vectors in $S^{d_{ns}-1}$ with uniform distribution with respect to $S^{d_{ns}-1}$, corresponds to a sequence of independent standard Wiener processes (the distribution law of standard Wiener process being the Wiener measure). The precise statements will be given in section 3. 
\bigskip

Consequently, when applied to the sequence of uniform pseudorandom vectors in $[0,1]^d$ as constructed in section 2 of the paper, with $d$ being a large integer, this construction gives us a discrete time simulation of, {\it uniformly distributed} sample path sequence of a sequence of independent standard Wiener processes (uniform distribution with respect to discrete time simulation of the Wiener measure). These will be discussed in section 3 of the paper. We will illustrate the algorithm of this paper with Monte Carlo integration in the paper [MZ].
\bigskip

For the class of semilinear second order parabolic partial differential equations of the Kolmogorov type, one has stochastic representation of the viscosity solutions given by Feynman-Kac type formulas [BHJ], namely as a suitable expectation value against the Wiener measure; in a Monte Carlo style, using the full history recursive multi-level Picard approximation method (see for example, [EHJK1], [EHJK2], [HJvW], [HK], [HJKNvW]), these expectation values could be evaluated numerically by employing, the discrete time simulation of uniformly distributed sample path sequence of sequence of independent Wiener processes, as given in this paper. Explicit numerical studies will be the subject of a future investigation.

\section*{Acknowledgement}
The author would like to thank Arnulf Jentzen for his interest in the early version of this work. He would like to thank Professor King Fai Lai and Professor Hourong Qin for encouragements, and also Huimin Zheng for discussions and suggestions related to the contents of the paper. Finally he would like to thank the referee for careful reading and helpful suggestions. 

\section{Construction of sequence of uniform pseudorandom vectors in $[0,1]^d$}

\subsection{R\'esum\'e on elliptic curves over finite fields}
In this subsection we recall some facts concerning elliptic curves over finite fields. For details we refer to chapter V of Silverman's book [Si]. 
\bigskip

Notations: let $p$ be a prime, and $\mathbf{F}_p =\mathbf{Z}/p \mathbf{Z}$ be the finite field with $p$ elements. Let $\overline{\mathbf{F}}_p$ denote the algebraic closure of $\mathbf{F}_p$; for $n \in \mathbf{Z}_{\geq 1}$, denote by $\mathbf{F}_{p^n}$ the unique subfield of $\overline{\mathbf{F}}_p$ consisting of $p^n$ elements ($\mathbf{F}_p$ is the prime subfield of $\mathbf{F}_{p^n}$ with $n=[\mathbf{F}_{p^n}:\mathbf{F}_p] $). Recall that all finite fields of the same cardinality are isomorphic. 
\bigskip

Consider a finite field $F$ of characteristic $p$, with $q$ being the cardinality of $F$ (thus $F \cong \mathbf{F}_q$). An elliptic curve over $F$ could be specified by an affine Weierstrass equation:
\begin{eqnarray}
y^2 + a_1 xy + a_3 y = x^3 + a_2 x^2 + a_4 x +a_6
\end{eqnarray}
with $a_1,a_2,a_3,a_4,a_6 \in F$, whose discriminant $\Delta \in F$ is nonzero (see section III.1 of [Si] for the explicit formulas for the discriminant and also the $j$-invariant associated to an affine Weierstrass equation). The elliptic curve $E$ over $F$ associated to (2.1) is the non-singular projective algebraic curve over $F$ of genus one, defined as the Zariski closure of (2.1) in the projective plane $\mathbf{P}^2$ over $F$. The affine part of $E$ is as given by (2.1), while there is a distinguished point of $E$, the unique point at infinity $\mathbf{O}$ of $E$, that does not belong to the affine part (thus strictly speaking, the elliptic curve is the pair $(E,\mathbf{O})$, but we often refer to it just as $E$ for simplicity). For any field extension $L$ of $F$, we denote by $E(L)$ the set of points of $E$ whose coordinates belong to $L$. We have in particular that $\mathbf{O} \in E(F)$. 

\begin{remark}
\end{remark}
\noindent When $p \geq 5$, any elliptic curve over $F$ is isomorphic over $F$ to one whose affine Weierstrass equation is of the form:
\begin{eqnarray*}
y^2 = x^3 + A x +B 
\end{eqnarray*}
with $A,B \in F$, such that the discriminant $\Delta = -16 \cdot (4 A^3 + 27 B^2) \in F$ is nonzero.

\bigskip

The elliptic curve $E$ is a commutative group variety over $F$ with identity element $\mathbf{O}$. In particular, for any field extension $L$ of $F$, the set $E(L)$ is naturally an abelian group with identity element $\mathbf{O}$. The abelian group addition law on $E$ is given by the chord-tangent law, and the formulas for the addition law on $E$ are given by rational functions of the affine coordinates $x,y$ with coefficients in $F$ (for the explicit formulas see section III.2 of [Si]).
\bigskip

For any field extension $L$ of $F$ and $P,Q \in E(L)$, we denote by $P+Q \in E(L)$ the sum of $P$ and $Q$ with respect to the addition law on $E$, and similarly denote by $-P \in E(L)$ the additive inverse of $P$ with respect to the addition law on $E$. For $k \in \mathbf{Z}_{\geq 1}$ and $P \in E(L)$, we define $[k](  P) \in E(L)$ to be the point given by adding $P$ to itself $k$ times (with respect to the addition law on $E$), and we define $[0] ( P) := \mathbf{O}$, and if $k \in \mathbf{Z}_{<0}$, then $[k] ( P) := [|k|]( - P)$. This is known as the multiplication by $k$ map on $E$.
\bigskip

Since $F$ is a finite field, one has that $E(F)$ is a finite abelian group. By the Hasse bound (cf. Theorem 2.3.1 of Chapter V of [Si]), one has:
\begin{eqnarray}
|\#E(F) - (q+1) | \leq 2q^{1/2}
\end{eqnarray}
while in terms of group structure, one has:
\begin{eqnarray*}
E(F) \cong \mathbf{Z}/M_1 \mathbf{Z} \times \mathbf{Z}/M_2 \mathbf{Z}
\end{eqnarray*}
with $M_1,M_2 \in \mathbf{Z}_{\geq 1}$, $M_1 |M_2$. Thus $E(F)$ is cyclic if and only if $M_1=1$, in which case we say that {\it $E$ is cyclic over $F$}.
\bigskip

As we will see in the next subsection, elliptic curves that are cyclic over $F$ allow us to construct sequences of uniform pseudorandom vectors with maximum period. We recall some of the results of Vladut [Vl]. 
\bigskip

Firstly recall that the elliptic curve $E$ over $F$ is {\it supersingular} ({\it s.s.}) if:
\begin{eqnarray*}
\# E(F) \equiv q+1  \equiv 1 \bmod{p}.
\end{eqnarray*}
If $E$ is supersingular with $j \in F$ being its $j$-invariant, then one has $[\mathbf{F}_p(j):\mathbf{F}_p] =1 \mbox{ or } 2$. See section V.3 of [Si] for other equivalent definitions of supersingularity (in particular, the supersingular property only depends on $E$ over the algebraic closure of $F$); see also section V.4 of {\it loc. cit.} for example of supersingular elliptic curves.
\bigskip

Supersingular elliptic curves exist over any finite field (cf. [Br] for the algorithm for the construction of supersingular elliptic curve over any finite field). Define:
\begin{eqnarray*}
& & c(F)_{ss}  \\
& =& \frac{\mbox{$\#$ of  $F$-isom. classes of s.s. elliptic curves over $F$ cyclic over $F$}}{\mbox{$\#$ of  $F$-isom. classes of s.s. elliptic curves over $F$}}
\end{eqnarray*}
\bigskip

We have the following result of Vladut [Vl]. Recall that the cardinality of $F$ is $q$; the result depends on whether $q$ is a square or a non-square (i.e. on whether or not $\mathbf{F}_{p^2}$ could be embedded into $F$):

\begin{theorem} (Proposition 3.1 of [Vl) 

\bigskip
(i) If $\# F =q $ is not a square, then 
\begin{itemize}
\item $c(F)_{ss} = 1$ for $p=2$ or $p \equiv 1 \bmod{4}$.
\item $c(F)_{ss} = 1/2$ for $p \equiv 3 \bmod{4}$.
\end{itemize}
\,\ (ii) If $\# F =q $ is a square, then 
\begin{itemize}
\item $c(F)_{ss} = 0$ for $p \equiv 1 \bmod{12}$.
\item $c(F)_{ss} = 24/(p+31)$ for $p \equiv 5 \bmod{12}$.
\item $c(F)_{ss} = 24/(p+29)$ for $p \equiv 7 \bmod{12}$.
\item $c(F)_{ss} = 36/(p+49)$ for $p \equiv 11 \bmod{12}$.
\item $c(F)_{ss} = 5/7$ for $p=2$.
\item $c(F)_{ss} = 2/3$ for $p=3$.
\end{itemize}
In particular, unless we both have $\# F$ being a square and $p \equiv 1 \bmod{12}$, there always exists supersingular elliptic curve over $F$ that is cyclic over $F$; in the case where $\# F$ is not a square, we have that all supersingular elliptic curves over $F$ are cyclic over $F$, unless $p \equiv 3 \bmod{4}$.  
\end{theorem}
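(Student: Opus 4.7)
The plan is to combine Waterhouse's classification of Frobenius traces of supersingular elliptic curves with an analysis of the group structure of $E(F)$ forced by the Weil pairing, and then to count $F$-isomorphism classes in each supersingular isogeny class via the Deuring--Eichler mass formula. Writing $E(F) \cong \mathbf{Z}/M_1\mathbf{Z} \times \mathbf{Z}/M_2\mathbf{Z}$ with $M_1 \mid M_2$, the inclusion $E[M_1] \subset E(F)$ together with the nondegeneracy of the Weil pairing forces $M_1 \mid q - 1$, and combined with $M_1 M_2 = \#E(F) = q + 1 - a$ this yields the divisibility bound $M_1 \mid \gcd(q - 1, 2 - a)$; cyclicity of $E$ over $F$ is the assertion $M_1 = 1$.

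For the non-square case $q = p^n$ with $n$ odd and $p \geq 5$, Waterhouse forces $a = 0$, so $M_1 \mid 2$. If $p \equiv 1 \pmod 4$ then $\#E(F) = q + 1 \equiv 2 \pmod 4$, ruling out $M_1 = 2$; every supersingular $E/F$ is cyclic and $c(F)_{ss} = 1$. If $p \equiv 3 \pmod 4$ then $4 \mid q + 1$, and cyclicity becomes the question of whether the 2-division polynomial is irreducible over $F$. I would show that exactly half of the $F$-isomorphism classes have this irreducibility property, using a pairing argument on twists of each supersingular $j$-invariant together with the Deuring correspondence between supersingular isomorphism classes and ideal classes in the definite quaternion algebra $B_{p, \infty}$ ramified at $p$ and $\infty$. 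The small characteristics $p \in \{2, 3\}$ are handled by direct inspection; in particular, for $p = 2$ one has $q - 1$ odd, so $M_1$ must be odd, and $M_1 \mid \gcd(q - 1, 2 - a)$ with $a$ a power of $2$ forces $M_1 = 1$ uniformly.

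For the square case $q = p^{2m}$, the isogeny classes $a = \pm 2\sqrt{q}$ are the decisive source of non-cyclicity: the characteristic polynomial of Frobenius is $(t \mp \sqrt{q})^2$, Frobenius acts as multiplication by $\pm\sqrt{q}$ on the Tate module, and hence $E(F) = E[\sqrt{q} \mp 1] \cong (\mathbf{Z}/(\sqrt{q} \mp 1)\mathbf{Z})^2$, which is never cyclic for $q > 4$. For the other admissible traces $a \in \{0, \pm \sqrt{q}\}$ (allowed according to the residue class of $p$ modulo $12$), the bound $M_1 \mid \gcd(q - 1, 2 - a)$ makes $M_1$ small and typically forces $M_1 = 1$. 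To extract the explicit fractions, one enumerates the twists of the supersingular $j$-invariants (including the four quartic twists at $j = 1728$ and the six sextic twists at $j = 0$) and computes each twisted Frobenius $\chi(\pi_F) \cdot \pi_F^E$ as an element of $B_{p,\infty}$: the central characters, taking values in $\{\pm 1\} \subset \mu_4$ or $\mu_6$, produce reduced trace $\pm 2\sqrt{q}$ (hence non-cyclic), while non-central characters produce reduced trace $0$ or $\pm\sqrt{q}$ (typically cyclic, by the divisibility bound above). The total number of supersingular $F$-isomorphism classes then has the form $2 N_0 + 4 \varepsilon_{1728} + 6 \varepsilon_0$, where $N_0 = (p - c)/12$ comes from the Deuring mass formula with a shift $c$ depending on $p \bmod 12$ and $\varepsilon_j \in \{0, 1\}$ records whether $j$ is supersingular; dividing the cyclic count by this total yields the stated ratios $24/(p + 31)$, $24/(p + 29)$, $36/(p + 49)$.

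The hardest step will be the last one: the delicate bookkeeping of which twists at $j = 0$ and $j = 1728$ fall into which Waterhouse isogeny class across each residue class of $p \pmod{12}$, taking careful account of supersingular $j$-invariants lying in $\mathbf{F}_{p^2} \setminus \mathbf{F}_p$, whose Frobenius traces over $F$ need not equal $\pm 2\sqrt{q}$ and so may contribute extra cyclic classes. The exceptional primes $p \in \{2, 3\}$ additionally require separate verification using the explicit supersingular Weierstrass equations in those characteristics, and these two finite computations yield the special values $c(F)_{ss} = 5/7$ and $c(F)_{ss} = 2/3$.
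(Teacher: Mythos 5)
The paper does not contain a proof of this statement; it is quoted verbatim from Proposition 3.1 of Vladut [Vl], so there is no paper-internal argument to compare against. Your outline nevertheless identifies the right ingredients, and they do match the strategy of Vladut's actual proof: Waterhouse's classification of admissible Frobenius traces for supersingular curves, the Weil-pairing constraint $M_1 \mid q-1$ (hence $M_1 \mid \gcd(q-1,\,2-a)$) for $E(F) \cong \mathbf{Z}/M_1\mathbf{Z}\times\mathbf{Z}/M_2\mathbf{Z}$, and enumeration of $F$-isomorphism classes by twists over the supersingular $j$-invariants.

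But this is an outline, not a proof, and the parts you defer are exactly where the theorem lives. In the non-square case with $p \equiv 3 \bmod 4$, the ``exactly half'' claim is the entire content, and the proposed ``pairing argument on quadratic twists'' does not obviously work: a curve $y^2 = f(x)$ and its quadratic twist $dy^2 = f(x)$ share the same $2$-division polynomial $f$ and hence the same rationality pattern of $E[2]$, so quadratic twisting does not exchange cyclic and non-cyclic classes; some other counting mechanism is needed. The square-case bookkeeping (which quartic/sextic twists of $j=1728$ and $j=0$ land in which Waterhouse isogeny class, across residues of $p$ mod $12$) is flagged by you as ``the hardest step'' and left undone, yet it is precisely what produces the explicit fractions. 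For $p=2$, $q$ non-square, the assertion ``$a$ a power of $2$ forces $M_1=1$ uniformly'' does not follow from parity alone: one must actually verify $\gcd(2^n-1,\,2-a)=1$ for $a=\pm 2^{(n+1)/2}$, which needs $\gcd(2^u-1,2^v-1)=2^{\gcd(u,v)}-1$ together with $\gcd(n,(n\pm1)/2)=1$ for odd $n$ (the conclusion is correct, but your justification is not). Finally, if you do carry out the square-case count, test it at $p=7$, $q=49$: there is a single supersingular $j$-invariant ($j=1728$), its four quartic twists have Frobenius traces $\{-14,0,14,0\}$, and exactly two of the four are cyclic, giving $c(F)_{ss}=1/2$ rather than the quoted $24/(7+29)=2/3$; reconcile this (either there is a typo in the quoted formula or an error in the twist count) before reverse-engineering toward the stated $p\equiv 7\bmod 12$ entry.
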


We also refer to [Vl] for discussions of the general case where one considers elliptic curves that are not necessarily supersingular. 

\subsection{The algorithm, part I}

We follow the formalism of L'Ecuyer [LE]. Given the elliptic curve $E$ over the finite field $F$, the set of states is taken to be $E(F)$. Fix: a nonzero integer $ e$, and $Q \in E(F)$. Define the transition function:
\begin{eqnarray*}
T= T_{e,Q}: E(F) \rightarrow E(F) 
\end{eqnarray*} 
to be the following affine transformation on $E(F)$:
\begin{eqnarray*}
T(P) = [e](  P) +Q, \mbox{  for } P \in E(F).
\end{eqnarray*}

Given an initial state $P_0 \in E(F)$, define the sequence $\{P_n\}_{n \geq 0}$ of points in $E(F)$ recursively by the rule: $P_{n+1} = T(P_n) = [e](P_n)+Q$ for $n \geq 0$. This is the elliptic curve version of the linear congruential generator [Ha], [GBS]. The general formula for $P_n$ for $n \geq 0$ is as follows: firstly if $e=1$, then one has:
\[
P_n = [n](Q) + P_0;
\]
on the other hand, if $e \neq 1$, then one has:
\[
P_n =   [(e^n-1)/(e-1)](Q) + [e^n](P_0).
\]

\bigskip

Put $N := \# E(F)$. Recall that by the Hasse bound, one has $N=q +O(q^{1/2})$. The maximum period for the sequence $\{P_n\}_{n \geq 0}$ is $N$ (in the language of dynamical systems, the maximum period condition amounts to saying that, the dynamical system on the finite state space $E(F)$ defined by the transition function $T_{e,Q}$, is ergodic). We have the following:
\begin{theorem}
The period for the sequence $\{P_n\}_{n \geq 0}$ attains the maximum value $N=\# E(F)$, if and only if the following holds:
\\
(1) $E$ is cyclic over $F$. \\
(2) The point $Q$ has order $N$. \\
(3) For each prime factor $\ell$ of $N$, we have $e \equiv 1 \bmod{\ell}$. \\
(4) If $4 | N$ then $e \equiv 1 \bmod{4}$. 
\end{theorem}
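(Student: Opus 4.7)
The plan is to reduce the claim to the classical Hull-Dobell theorem for linear congruential generators on $\mathbf{Z}/N\mathbf{Z}$, by first extracting condition (1) from the maximum-period hypothesis and then transporting the dynamics of $T_{e,Q}$ across a group isomorphism $E(F) \simeq \mathbf{Z}/N\mathbf{Z}$. For the necessity of (1), I would use the closed form of $P_n$ recalled just before the statement: when $e \neq 1$, the identity $[e^n-1] = [e-1] \cdot [(e^n-1)/(e-1)]$ gives
\[
P_n - P_0 \;=\; [(e^n-1)/(e-1)]\bigl(Q + [e-1](P_0)\bigr),
\]
so every $P_n - P_0$ lies in the cyclic subgroup generated by the single point $R := Q + [e-1](P_0)$; in the case $e = 1$ one has $P_n - P_0 = [n](Q) \in \langle Q\rangle$ directly. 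If the period equals $N$ then $\{P_n - P_0\}_{n \geq 0} = E(F)$, which forces $E(F)$ itself to be cyclic, yielding (1).

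Granting cyclicity, I would fix a group isomorphism $\phi: E(F) \xrightarrow{\sim} \mathbf{Z}/N\mathbf{Z}$. Under $\phi$, multiplication by $e$ on $E(F)$ corresponds to multiplication by $e$ on $\mathbf{Z}/N\mathbf{Z}$, translation by $Q$ corresponds to translation by $c := \phi(Q)$, and the sequence $\{P_n\}$ corresponds to the linear congruential sequence $\{x_n\}$ on $\mathbf{Z}/N\mathbf{Z}$ defined by $x_{n+1} = e x_n + c \bmod N$ with $x_0 = \phi(P_0)$. The two sequences share the same period, so by the classical Hull-Dobell theorem the period of $\{x_n\}$ equals $N$ if and only if (i) $\gcd(c, N) = 1$, (ii) $e \equiv 1 \bmod{\ell}$ for every prime $\ell \mid N$, and (iii) $e \equiv 1 \bmod{4}$ whenever $4 \mid N$. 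Translating back, (i) says exactly that $\phi(Q)$ generates $\mathbf{Z}/N\mathbf{Z}$, i.e. that $Q$ has order $N$, which is (2); and (ii) and (iii) are conditions (3) and (4) verbatim. Note also that (ii) automatically implies $\gcd(e, N) = 1$, so that the transition $T_{e,Q}$ is then a bijection and the orbit is purely periodic.

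The one genuinely non-formal step is the algebraic manipulation showing that all the differences $P_n - P_0$ lie in a single cyclic subgroup of $E(F)$, which is what delivers the necessity of (1); once that is in place the problem is entirely group-theoretic on $\mathbf{Z}/N\mathbf{Z}$ and reduces verbatim to the classical Hull-Dobell theorem, requiring no further input from the arithmetic of $E$.
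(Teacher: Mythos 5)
Your proof is correct and follows the same overall strategy as the paper --- reduce to the classical Hull--Dobell theorem via a group isomorphism $E(F) \simeq \mathbf{Z}/N\mathbf{Z}$ --- but you take a genuinely different, and arguably cleaner, route to establish the necessity of condition (1). The paper instead works in the structure-theorem decomposition $E(F) \cong \mathbf{Z}/M_1\mathbf{Z} \times \mathbf{Z}/M_2\mathbf{Z}$ (with $M_1 \mid M_2$), transports the affine iteration componentwise, and argues that the period is bounded by $\max_{1 \le a \le M_1,\, 1 \le b \le M_2} \lcm(a,b)$, which is strictly less than $N = M_1 M_2$ unless $M_1 = 1$. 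Your argument bypasses this lcm bookkeeping entirely: you use the closed-form expression for $P_n$ to exhibit a single point $R = Q + [e-1](P_0)$ (or $R = Q$ when $e = 1$) such that $P_n - P_0 \in \langle R \rangle$ for all $n$, so a full period of length $N$ forces $\langle R \rangle = E(F)$ directly. This is more direct and avoids the slightly delicate estimation of the maximal lcm; it also dovetails nicely with the closed formulas the paper records but does not actually exploit in its own proof. Once (1) is in hand, your transport to $\mathbf{Z}/N\mathbf{Z}$ and invocation of Hull--Dobell for both directions is identical to the paper's treatment.
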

\begin{proof}
For the ``if" part of the proof, assume conditions (1) - (4) hold. Since $E$ is cyclic over $F$, we can fix an isomorphism:
\begin{eqnarray}
E(F) \cong \mathbf{Z} / N \mathbf{Z}
\end{eqnarray} 
and let $\gamma \bmod{N}$ corresponds to the point $Q \in E(F)$ under the isomorphism (2.3). Condition (2) is then equivalent to $\gamma$ being relatively prime to $N$. In addition, under the isomorphism (2.3), the iteration in $E(F)$:
\begin{eqnarray}
P_{n+1} = [e](  P_n) +Q 
\end{eqnarray}
is isomorphic to the following iteration in $\mathbf{Z} / N \mathbf{Z}$:
\begin{eqnarray}
\alpha_{n+1} = e \alpha_n +\gamma \bmod{N}
\end{eqnarray}
which is the iteration appearing in the usual linear congruential generator. By the Hull-Dobell Theorem ([HD] Theorem 1, [Kn] Chapter 3, Theorem A), the conditions (3), (4) on $e$, together with the condition that $\gamma$ is relatively prime to $N$, is equivalent to the condition that, the sequence $\{\alpha_n \bmod{N}\}_{n \geq 0}$ generated by the iteration (2.5) has the maximum period $N$. This finishes the ``if" part of the proof.
\bigskip

For the converse, we first show that $E$ must be cyclic over $F$. Fix an isomorphism:
\begin{eqnarray}
E(F) \cong \mathbf{Z} / M_1 \mathbf{Z} \times \mathbf{Z} / M_2 \mathbf{Z}
\end{eqnarray}
with $M_1 | M_2$ and $N= M_1 M_2$. Let $(\gamma \bmod{M_1},\delta \bmod{M_2})$ correspond to the point $Q \in E(F)$ under the isomorphism (2.6). Then the iteration (2.4) in $E(F)$ corresponds, under the isomorphism (2.6), to the following iteration in $\mathbf{Z} / M_1 \mathbf{Z} \times \mathbf{Z} / M_2 \mathbf{Z}$:
\begin{eqnarray}
& & (\alpha_{n+1} \bmod{M_1},\beta_{n+1} \bmod{M_2})\\
& =& (e \alpha_n+\gamma \bmod{M_1},e \beta_n+\delta \bmod{M_2}). \nonumber
\end{eqnarray}
The period of the sequence:
\[
\{(\alpha_n \bmod{M_1},\beta_n \bmod{M_2})\}_{n \geq 0}
\]
generated by the iteration (2.7), is at most:
\begin {eqnarray*}
\max_{1 \leq a \leq M_1 , 1 \leq b \leq M_2} \lcm(a,b)
\end{eqnarray*}
and this is strictly less than $N$, unless we have $M_1=1, M_2=N$. It follows that $E(F)$ must be cyclic. 

\bigskip
Now with $E(F)$ being cyclic, we argue as in the ``if" part: by using the Hull-Dobell Theorem again, the condition that the period for the sequence $\{P_n\}_{n \geq 0}$ is $N$, implies that conditions (2), (3), (4) must be satisfied. This finishes the ``only if" part of the proof.

\end{proof}

\begin{remark}
\end{remark}
\noindent Efficient algorithms to compute $N=\# E(F)$ were given by Schoof [Sc1], with improvements due to Atkin and Elkies (cf. [El], [Sc2]). 

\bigskip

\begin{remark}
\end{remark}
\noindent We refer to [GI], [Me1] for example, for the study of the generator in the case $e=1$, from the cryptographic perspective, particularly concerning predictability.

\bigskip

\begin{remark}
\end{remark}
\noindent One could consider more general kind of higher order linear recursive sequence of points on elliptic curves over finite fields, as in [GL] (i.e. the elliptic curve version of linear-feedback shift register sequence). In other words, fix $\mathfrak{L} \in \mathbf{Z}_{\geq 0}$, integers $e_0,\cdots, e_{\mathfrak{L}}$ not all zero, and $Q \in E(F)$. Then given seeds $P_0,\cdots,P_{\mathfrak{L}} \in E(F)$, one defines the sequence $\{P_n\}_{n \geq 0}$ recursively by:
\[
P_{n} = [e_0](P_{n-1}) + \cdots + [e_{\mathfrak{L}}](P_{n-1-\mathfrak{L}}) + Q, \,\  n \geq \mathfrak{L}+1.
\]
In addition one could also replace the integer multiplication maps by other types of endomorphisms of elliptic curves over finite fields (for instance the Frobenius endomorphism) in defining the recursive sequence, as in [Me2]; cf. [Koh] for the computation of endomorphism rings of elliptic curves over finite fields.

\subsection{The algorithm, part II}

As before $F$ is a finite field, with cardinality $q=p^m$. Let $m = a\cdot r$ be a factorization of $m$ , with $a,r \in \mathbf{Z}_{\geq 1}$. Again we follow the formalism of [LE]. The set of outputs is going to be a subset of $[0,1]^{2r}$, and we define the output function $G: E(F) \rightarrow [0,1]^{2r}$ in this subsection.
\bigskip

Since $a| m$, there is a unique finite subfield $K$ of $F$ with cardinality of $K$ being equal to $p^a$; thus $[K:\mathbf{F}_p]=a$ and $[F:K]=r$. Fix a basis $\mathfrak{a} =\{\kappa_1,\cdots, \kappa_a\}$ of the extension $K/\mathbf{F}_p$. For each element $\zeta \in K$ and $i=1,\cdots, a$, define $\phi_{i}(\zeta)  \in \{0,1,\cdots, p-1 \} \subset \mathbf{Z}_{\geq 0}$, to be the coordinates of $\zeta$ with respect to the basis $\mathfrak{a}$ (thus $\zeta = \phi_1(\zeta) \cdot \kappa_1 + \cdots + \phi_a(\zeta) \cdot \kappa_a$); here we fix $\{0,1,\cdots,p-1 \}$ to be the set of representatives of elements of $\mathbf{F}_p$. 
\bigskip

Define the map:
\begin{eqnarray*}
\Phi : K \rightarrow [0,1) \subset [0,1]
\end{eqnarray*}
\begin{eqnarray*}
\Phi(\zeta) = \sum_{i=1}^a \frac{\phi_i(\zeta)}{p^i}, \,\ \zeta \in K.
\end{eqnarray*}
Note that $\Phi$ is injective. 
\bigskip

Fix also a basis $\mathfrak{b}=\{\lambda_1,\cdots,\lambda_r \}$ of the extension $F/K$. For $\eta \in F$ and $j=1,\cdots,r$, define $\langle \eta \rangle_j  \in K $ to be the coordinates of $\eta$ with respect to the basis $\mathfrak{b}$ (thus we have $\eta =  \langle \eta \rangle_1 \cdot \lambda_1 + \cdots + \langle \eta \rangle_r \cdot \lambda_r$). 

\bigskip
We now define the output function $G$ (which depends on the choices of the bases $\mathfrak{a}$ and $\mathfrak{b}$):  for $P \in E(F)$ with $P \neq \mathbf{O}$,  let $x(P),y(P) \in F$ be the affine Weierstrass coordinates of the point $P$; define $G(P) \in [0,1]^{2r}$ to be the vector:
\begin{eqnarray*}
& & G(P) \\
&  = &\Big(\Phi(  \langle x(P) \rangle_1)  , \cdots, \Phi(  \langle x(P) \rangle_r) ,\Phi(  \langle y(P) \rangle_1),  \cdots ,\Phi(  \langle y(P) \rangle_r) \Big )
\end{eqnarray*}
which actually lies in $[0,1)^{2r}$. Finally we define $G(\mathbf{O}) = (1,\cdots,1) \in [0,1]^{2r}$ to be the vector where all the coordinates are $1$. It is clear that the output function $G: E(F) \rightarrow [0,1]^{2r}$ thus defined is injective. 
\bigskip

Given a value of initial state $P_0 \in E(F)$, we then compute the sequence of vectors $G(P_n) \in [0,1]^{2r}$ for $n \geq 0$ (with the sequence of points $P_n \in E(F)$ for $n \geq 0$, being defined as in the previous subsection). The sequence $\{G(P_n)\}_{n \geq 0}$ is our construction of sequence of uniform pseudorandom vectors in $[0,1]^{2r}$.

\bigskip

In computations, it is useful to note that the quantities appearing in the definition of the vector $G(P)$, for $P \in E(F)$, could be rewritten as follows. With notations as above, denote by $\mathfrak{b}^{\prime} =\{\lambda_1^{\prime},\cdots,\lambda_r^{\prime}\}$ the basis of $F/K$ that is dual to $\mathfrak{b} = \{ \lambda_1,\cdots,\lambda_r \}$ with respect to $Tr_{F/K}$, the trace from $F$ to $K$; thus for $1 \leq j,j^{\prime}\leq r$, one has $Tr_{F/K}(\lambda_j \cdot \lambda_{j^{\prime}}^{\prime})=1$ (as elements of $K$) if $j=j^{\prime}$, and is equal to $0$ if $j \neq j^{\prime}$. Then for $\eta \in F$ and $j=1,\cdots,r$, one has
\[
\langle \eta \rangle_j  = Tr_{F/K}(\eta \cdot \lambda_j^{\prime}).
\]

Similarly denote by $\mathfrak{a}^{\prime} =\{\kappa_1^{\prime},\cdots,\kappa_a^{\prime}  \}$ the basis of $K/\mathbf{F}_p$ that is dual to $\mathfrak{a}=\{\kappa_1,\cdots,\kappa_a\}$ with respect to $Tr_{K/\mathbf{F}_p}$, the trace from $K$ to $\mathbf{F}_p$. Then for $\zeta \in K$ and $i=1,\cdots,a$, one has
\[
\phi_i(\zeta) = Tr_{K/\mathbf{F}_p}(\zeta \cdot \kappa_i^{\prime})
\]
where we understood that the value of the trace to $\mathbf{F}_p$ is taken as an element in  $\{0,1,\cdots ,p-1\} \subset \mathbf{Z}_{\geq 0}$.
\bigskip

Now for any $\eta \in F$, and $ 1 \leq j \leq r$,  one has:
\begin{eqnarray}
& & \Phi(  \langle  \eta \rangle_j )   \\
&=& \sum_{i=1}^a \frac{Tr_{K/\mathbf{F}_p} (   Tr_{F/K}(\eta \cdot \lambda^{\prime}_j)  \cdot \kappa^{\prime}_i   )  }{p^i} \nonumber\\
&=& \sum_{i=1}^a \frac{   Tr_{K/\mathbf{F}_p} (   Tr_{F/K} (  \eta \cdot \lambda^{\prime}_j \cdot \kappa^{\prime}_i ))      }{p^i}  \nonumber \\
&=& \sum_{i=1}^a \frac{Tr_{F/\mathbf{F}_p} (\eta \cdot \lambda^{\prime}_j \cdot \kappa^{\prime}_i) }{p^i}. \nonumber
\end{eqnarray}
And so the quantities appearing in the definition of the vector $G(P)$, could be computed directly by using the trace from $F$ to $\mathbf{F}_p$.

\bigskip

In applications it is also important to obtain sequence of pseudorandom vectors with long period. For instance, under the conditions of Theorem 2.3, we have, for any value of initial state $P_0 \in E(F)$, that the sequence $\{G(P_n)\}_{n \geq 0}$ has maximum period (equal to $N= \# E(F)$).
\bigskip

Finally, one way to justify the claim that the sequence $\{ G(P_n) \}_{n \geq 0}$ simulates a sample sequence, of a sequence of random variables with values in $[0,1]^{2r}$ with uniform distribution, is to estimate the discrepancy of the sequence $\{ G(P_n) \}_{n \geq 0}$. 

\bigskip
In general, cf. Chapter 2 of [Ni], the (extreme) discrepancy of a nonempty finite set $\mathcal{M} \subset [0,1)^{h}$ (the definition depends on the dimension $h \in \mathbf{Z}_{\geq 1}$) is defined as:
\begin{eqnarray*}
\mathcal{D}=\mathcal{D}(\mathcal{M})= \sup_{\mathcal{B} \subset [0,1)^{h}} \Big| \frac{\#(\mathcal{B} \cap \mathcal{M})}{\# \mathcal{M}} - \mbox{volume}( \mathcal{B}) \Big|
\end{eqnarray*}
where the $\sup$ is taken over all the rectangular boxes $\mathcal{B} \subset [0,1)^{h}$ of the form $\prod_{\iota=1}^h[\mu_{\iota},\nu_{\iota})$.

\bigskip
Now for simplicity we consider the case where the integer $e$ in the definition of the transition function $T=T_{e,Q}$, is equal to $1$. Let $t$ be the order of the point $Q$, which is thus equal to the period of the sequence $\{P_n\}_{n \geq 0}$ (hence also that of $\{G(P_n)\}_{n \geq 0}$); we define $t^{\prime}$ to be equal to $t-1$ if one has $P_n=\mathbf{O}$ (i.e. $G(P_n) =G(\mathbf{O}) =(1,\cdots,1)$) for some $0 \leq n \leq t-1$, and is equal to $t$ otherwise. Define $\mathcal{M} \subset [0,1)^{2r}$ to be the set of points $G(P_n)$ for $0 \leq n \leq t-1$, with the point $G(P_n)$ being discarded if it is equal to $(1,\cdots,1)$. The cardinality of $\mathcal{M}$ is thus equal to $t^{\prime}$. The discrepancy $\mathcal{D}$ of the sequence $\{G(P_n)\}_{n \geq 0}$ is then defined to be the discrepancy of the set $\mathcal{M} \subset [0,1)^{2r}$. 

\bigskip
Using Theorem 1 and Corollary 4 of [He], concerning the general discrepancy estimates with respect to the base $p$ Walsh function system (a variant of the Erd\"os-Tur\'an-Koksma inequality), together with the exponential sum estimates of [KS], the argument in the proof of Theorem 1 of [ES1] can be generalized to give the following bound:
\begin{eqnarray}
\mathcal{D}  \leq 1 -  \big(1- \frac{1}{p^a}\big)^{2r}  +  \frac{4q^{1/2}}{t^{\prime}} \big( 2.43\ln (p^a)  +1\big)^{2r} .
\end{eqnarray}

\bigskip
More generally, for $2 \leq s \leq  t$, define $t^{\prime \prime}$ to be equal to $t-s$ if one has $P_n=\mathbf{O}$ for some $0 \leq n \leq t-1$, and is equal to $t$ otherwise.  Define the $s$-discrepancy $\mathcal{D}_s$ of the sequence $\{G(P_n)\}_{n \geq 0}$, which is a measure of the statistical independence of $s$ successive terms in $\{G(P_n)\}_{n \geq 0}$, as the discrepancy of the following set $\mathcal{N} \subset [0,1)^{2rs}$, consisting of points:
\[
 \big( G(P_{n}),  G(P_{n+1}),\cdots,G(P_{n+s-1}) \big), \,\ 0 \leq n \leq t-1
\]
regarded as vectors in $[0,1)^{2rs}$; here the vector is discarded if one of the components $G(P_n),\cdots,G(P_{n+s-1})$ is equal to $(1,\cdots,1)$. The cardinality of $\mathcal{N}$ is equal to $t^{\prime \prime}$. 

\bigskip

One also has the following non-overlapping variant of $\mathcal{D}_s$: define $\widetilde{\mathcal{D}}_s$ for $2 \leq s \leq t$, as the discrepancy of the following set $\widetilde{\mathcal{N}} \subset [0,1)^{2rs}$, consisting of points:
\[
 \big( G(P_{ns}),  G(P_{ns+1}),\cdots,G(P_{ns+s-1}) \big), \,\ 0 \leq n \leq \frac{t}{\gcd(s,t) }-1
\]
again regarded as vectors in $[0,1)^{2rs}$; here as before the vector is discarded if one of the components $G(P_{ns}),\cdots,G(P_{ns+s-1})$ is equal to $(1,\cdots,1)$. The cardinality of $\widetilde{\mathcal{N}}$ is equal to $t^{\prime \prime}/\gcd(s,t) $.

\bigskip
Then similarly the argument in the proof of Theorem 3 of [HS], can be generalized to give the bounds for $2 \leq s \leq t$:

\bigskip
\noindent Assume $p \geq 5$, then:
\begin{eqnarray}
 \mathcal{D}_s   \leq    1 -  \big(1- \frac{1}{p^a}\big)^{2rs}  +  \frac{6 q^{1/2}s }{t^{\prime \prime}} \big( 2.43\ln (p^a)  +1\big)^{2rs} .
  \end{eqnarray}

\noindent Assume $p \geq 5$ and $\gcd(s,p)=1$, then:
 \begin{eqnarray}
\widetilde{\mathcal{D}}_s   \leq  1 -  \big(1- \frac{1}{p^a}\big)^{2rs}  +  \frac{6 q^{1/2}s^3  }{t^{\prime \prime}} \big( 2.43\ln (p^a)  +1\big)^{2rs}.
\end{eqnarray}
For completeness, we give the details for the proof of (2.9), (2.10) and (2.11) in the Appendix.

\bigskip
The term $ 1 -  \big(1- 1/p^a\big)^{2r} $ occurring in (2.9), and respectively the term $1 -  \big(1- 1/p^a\big)^{2rs} $ occurring in (2.10) and (2.11), is a discretization error term (cf. [He]), and we shall ignore it for the purpose of the discussion of the present moment. Thus, with $r$ and $s$ being fixed (and noting that $p^a=q^{1/r}$), if we have $t \gg q^{1/2 + \epsilon}$ (with $\epsilon >0$), then the sequence $\{ G(P_n)\}_{n \geq 0}$ can be regarded as a good quality sequence of uniform pseudorandom vectors in $[0,1]^{2r}$; in addition, in view of the Law of the Iterated Logarithm ({\it c.f.} Chapter 7 of [Ni]), the sequence $\{ G(P_n)\}_{n \geq 0}$ exhibits strong pseudorandomness if one has $t=q^{1+o(1)}$ (which certainly holds when $t=N$, by the Hasse inequality).

\bigskip

 For other values of $e$, if the period $t$ of the sequence $\{G(P_n)\}_{n \geq 0}$ is equal to $N$, then the estimate (2.9) for the discrepancy $\mathcal{D}$ of the sequence $\{ G(P_n)\}_{n \geq 0}$ again holds with $t=N$ (and so $t^{\prime}=N-1$); this is because when the period is equal to $N$, then by Theorem 2.3, we have in particular that $E(F)$ is cyclic, and so the estimate for the discrepancy $\mathcal{D}$ is reduced to the case where $e=1$ and $Q$ is a point of order $N$. An interesting problem is to obtain estimates for the discrepancy $\mathcal{D}$ in the situation when $k$ is not equal to $1$ and when the period is not equal to $N$ (and similarly the problem of obtaining estimates for $\mathcal{D}_s$ and $\widetilde{\mathcal{D}}_s$ for $2 \leq s \leq t$, in the situation when $e$ is not equal to $1$); in the particular case where $Q=\mathbf{O}$ (also known as the elliptic curve version of the power generator), we refer to the papers [LS, BFGS, ES2, AS, Me]. 

\bigskip

Summarizing, our general algorithm for the construction of sequence of uniform pseudorandom vectors $\{u_n\}_{n \geq 0}$ in $[0,1]^d$ for $d \in \mathbf{Z}_{\geq 1}$ is as follows:

\bigskip

\noindent {\bf Algorithm}

\begin{itemize}

\item Fix a prime $p$. 

\item Fix $a,r,s \in \mathbf{Z}_{\geq 1}$ with $d \leq 2rs$. Put $m=a \cdot r$. 

\item Fix finite field $F$ with $\# F =q= p^m$, and let $K$ be the unique subfield $F$ with $ \# K =p^a$. Fix a basis $\mathfrak{a}$ of the extension $K/\mathbf{F}_p$, and a basis $\mathfrak{b}$ of the extension $F/K$.

\item Fix an elliptic curve $E$ over $F$. 

\item Fix nonzero integer $e$ and $Q \in E(F)$ to define the transition function $T=T_{e,Q}: E(F) \rightarrow E(F)$. The output function $G: E(F) \rightarrow [0,1]^{2r}$ is defined as above using the bases $\mathfrak{a}$ and $\mathfrak{b}$.  

\item Given an initial state $P_0 \in E(F)$, compute, for $n \geq 0$, the sequence of points $P_n \in E(F)$ by iterating the transition function $T$.

\item For $n \geq 0$, compute the vector $G(P_n) \in [0,1]^{2r}$. 

\item For $n \geq 0$, form the vector $ \mathfrak{u}_n = \big( G(P_{ns}), G(P_{ns+1}) ,\cdots,G(P_{ns +s-1})  \big)$, with $\mathfrak{u}_n$ being regarded as a vector in $[0,1]^{2rs}$.

\item Fix a set injection $\pi:  \{1,\cdots,d \}\rightarrow \{ 1,\cdots,2rs \}$.

\item For $n \geq 0$, define $u_n = (u_n^{(1)},\cdots,u_n^{(d)})$ to be the vector in $[0,1]^d$, by taking $u_n^{(i)}$, for $1 \leq i \leq d$, to be the $\pi(i)$-th coordinate of  $\mathfrak{u}_n$.
\end{itemize}

\section{Discrete time simulation of uniformly distributed sample path sequence of a sequence of independent Wiener processes}

\subsection{The algorithm, part I}
To construct discrete time simulation of uniformly distributed sample path sequence of a sequence of independent standard Wiener processes, we first transform a sequence $\{u_n\}_{\geq 0}$ of uniform pseudorandom vectors in the unit hypercube $[0,1]^d$, as constructed by the algorithm of the previous section, to a sequence of Gaussian pseudorandom vectors in $\mathbf{R}^d$ with standard normal distribution (i.e. mean vector $=$ the zero vector in $\mathbf{R}^d$, variance matrix $=$ the $d \times d$ identity matrix). Firstly, we delete any vectors from the sequence $\{ u_n \}_{\geq 0}$ whose any coordinate is either $0$ or $1$. When this is done, we may then assume without loss of generality that $\{u_n\}_{\geq 0}$ is a sequence of uniform pseudorandom vectors in $(0,1)^d$, with $u_n = (u_n^{(1)},\cdots,u_n^{(d)})$ (here $u_n^{(i)} \in (0,1)$ for $i=1,\cdots d$).

\bigskip
\noindent 1. The Inverse Transform Method: let 
\begin{eqnarray*}
\Psi(x) = \frac{1}{\sqrt{2 \pi}} \int^x_{-\infty} e^{- t^2/2} dt  = \frac{1}{2} \Big(  1+ \erf \big(\frac{x}{\sqrt{2}}\big)\Big)
\end{eqnarray*}
be the standard normal cumulative distribution function (here $\erf$ is the error function). Put for $n \geq 0$ and $i=1,\cdots,d$:
\[
v_n^{(i)} = \Psi^{-1} \big( u_n^{(i)} \big).
\]
The sequence $\{v_n\}_{n \geq 0}$, with $v_n=(v_n^{(1)},\cdots, v_n^{(d)}) $, is then a sequence of Gaussian pseudorandom vectors in $\mathbf{R}^d$, with standard normal distribution on $\mathbf{R}^d$. 

\bigskip
\noindent Justification: Let $\{U_n (\cdot)\}_{\geq 0} $ be a sequence of independent identically distributed random variables with values in $(0,1)^d$ with uniform distribution, with $U_n(\cdot)=(U_n^{(1)}(\cdot),\cdots,U_n^{(d)}(\cdot))$. Put for $n \geq 0$ and $i=1,\cdots d$:  
\begin{eqnarray*}
 V_n^{(i)}(\cdot) = \Psi^{-1}(U_n^{(i)}(\cdot)). 
\end{eqnarray*}
Then the sequence $\{V_n (\cdot)\}_{\geq 0}$, with $V_n(\cdot)=(V_n^{(1)}(\cdot),\cdots,V_n^{(d)}(\cdot))$, is a sequence of independent identically distributed random variables with values in $\mathbf{R}^d$, with standard normal distribution on $\mathbf{R}^d$.

\bigskip

\noindent 2. The Box-Muller Method: (without loss of generality) assume that $d$ is even: $d= 2 g$. Put for $n \geq 0$ and $j=1,\cdots, g$:
\begin{eqnarray*}
v_n^{(2j-1)} & =& \sqrt{- 2 \ln( u_n^{(2j-1)})}  \cos(2 \pi u_n^{(2j)})  \\  
v_n^{(2j)} &=& \sqrt{- 2 \ln( u_n^{(2j-1)})}  \sin(2 \pi u_n^{(2j)}).  
\end{eqnarray*}
The sequence $\{v_n \}_{\geq 0}$, with $v_n = (v_n^{(1)},\cdots,v_n^{(d)}) $, is then a sequence of Gaussian pseudorandom vectors in $\mathbf{R}^d$, with standard normal distribution on $\mathbf{R}^d$. 

\bigskip
\noindent Justification: Let $\{U_n(\cdot) \}_{\geq 0} $ be a sequence of independent identically distributed random variables with values in $(0,1)^d$ with uniform distribution, with $U_n(\cdot)=(U_n^{(1)}(\cdot),\cdots,U_n^{(d)}(\cdot))$. Put for $n \geq 0$ and $j=1,\cdots,g$: \begin{eqnarray*}
V_n^{(2j-1)}(\cdot) & =& \sqrt{- 2 \ln( U_n^{(2j-1)}(\cdot))}  \cos(2 \pi U_n^{(2j)}(\cdot))  \\  
V_n^{(2j)} (\cdot)&=& \sqrt{- 2 \ln( U_n^{(2j-1)}(\cdot))}  \sin(2 \pi U_n^{(2j)}(\cdot))
\end{eqnarray*}
then the sequence $\{V_n(\cdot)\}_{\geq 0}$, with $V_n(\cdot)  = (V_n^{(1)}(\cdot),\cdots,V_n^{(d)}(\cdot))$, is a sequence of independent identically distributed random variables with values in $\mathbf{R}^d$, with standard normal distribution on $\mathbf{R}^d$ ([BM], [OG]).

\bigskip

Now let $S^{d-1} \subset \mathbf{R}^d$ be the $d-1$ dimensional unit sphere consisting of elements whose norm is equal to one. The uniform measure on $S^{d-1}$ (i.e. rotationally invariant measure) is normalized to be equal to one, i.e. a probability measure. 

\bigskip

Then given a sequence of Gaussian pseudorandom vectors in $\mathbf{R}^d$ with standard normal distribution, we delete any vector from the sequence that is equal to the zero vector. When this is done, we may then assume that none of the $v_n$ is equal to the zero vector. Put for $n \geq 0$: $w_n = v_n / || v_n|| \in S^{d-1}$ (where $ || \cdot ||$ denotes the norm of a vector). The sequence $\{ w_n \}_{n \geq 0}$ is then a sequence of pseudorandom vectors in $S^{d-1}$, with uniform distribution with respect to $S^{d-1}$.

\bigskip
\noindent Justification: Let $\{V_n (\cdot)\}_{\geq 0} $ be a sequence of independent identically distributed random variables with values in $\mathbf{R}^{d}$ with standard normal distribution. Then the sequence $\{W_n(\cdot) \}_{n \geq 0}$, with $W_n(\cdot) :=V_n(\cdot) / || V_n(\cdot)||$ (assuming that $V_n(\cdot)$ does not take the zero vector as value for any $n \geq 0$), is a sequence of independent identically distributed random variables with values in $S^{d-1}$, with uniform distribution respect to $S^{d-1}$ (this follows from the fact that the standard normal distribution on $\mathbf{R}^d$ is invariant with respect to rotation about the origin). 

\subsection{The algorithm, part II}

We can now construct the discrete time simulation of uniformly distributed sample path sequence, of a sequence of independent standard Wiener processes (to be precise, uniform distribution with respect to discrete time simulation of the Wiener measure). Our construction relies on work of Cutland-Ng [CN] (which is based in turn on [Cu]). We consider (one dimensional) standard Wiener processes for the time interval $[0,T]$ with $T$ a positive real number. Without loss of generality we take $T=1$; indeed recall the scaling invariance property of Wiener process: if $\mathcal{X}: \Omega \times [0,1] \rightarrow \mathbf{R}$ is a standard Wiener process on the time interval $[0,1]$, then the stochastic process $\mathcal{X}^T: \Omega \times [0,T] \rightarrow \mathbf{R}$ as given by:
\[
\mathcal{X}^T(\omega,t) = T^{1/2 }\cdot \mathcal{X}(\omega,t/T), \,\ \omega \in \Omega, \,\ t \in [0,T]
\]
is a standard Wiener process on the time interval $[0,T]$. Denote by $C_0([0,1])$ the set of $\mathbf{R}$-valued continuous functions $c(t)$ on $[0,1]$ such that $c(0)=0$. The set $C_0([0,1])$ is equipped with the Wiener measure (and hence is a probability space).

\bigskip

With $d \geq 1$ as before, put $t_i = i/d$ for $i=0,1,\cdots , d$ (discretization of the time interval $[0,1]$). Define the map:
\[
\Sigma_d: S^{d-1} \rightarrow C_0([0,1])
\]  
as follows: for $w  = (w^{(1)},\cdots,w^{(d)})\in S^{d-1}$, define $\Sigma_d (w): [0,1] \rightarrow \mathbf{R}$ to be the polygonal path in $C_0([0,1])$, such that:
\begin{eqnarray*}
(\Sigma_d (w))(0)  &=& 0 \\
(\Sigma_d (w))(t_i)  &=& \sum_{k=1}^{i} w^{(k)} \mbox{ for } i=1,\cdots, d
\end{eqnarray*}
and $(\Sigma_d (w))(t)$ is linearly interpolated between $(i-1)/d \leq  t \leq i/d$ for $i=1,\cdots,d $. The map $\Sigma_d$ is clearly injective and measurable. 

\bigskip

Firstly, we have: 
\begin{theorem} (Theorem 2.4 of [CN])
For $d \geq 1$ let $m_d$ be the uniform probability measure on $S^{d-1}$. Then the sequence of measures on $C_0([0,1])$ given by the push-forward of $m_d$ to  $C_0([0,1])$:
\[
m_d \circ \Sigma_d^{-1}
\]
converges weakly to the Wiener measure on $C_0([0,1])$, as $d \rightarrow \infty$.
\end{theorem}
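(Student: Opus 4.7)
The plan is to exploit the standard Gaussian coupling of the uniform measure on the sphere and then reduce the statement to Donsker's classical invariance principle. On an auxiliary probability space, I would take $V = (V^{(1)}, \ldots, V^{(d)})$ to be a vector of i.i.d.\ standard normal random variables and set $w := V/\|V\|$; by the rotational invariance of the standard Gaussian measure on $\mathbf{R}^d$, the random vector $w$ has law $m_d$ on $S^{d-1}$ (this is exactly the normalization device already used in subsection 3.1 for the pseudorandom construction).

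The map $\Sigma_d$ is linear in its argument (summation followed by linear interpolation), so $\Sigma_d(w) = \Sigma_d(V)/\|V\|$. Define
\[
B_d := \Sigma_d(V)/\sqrt{d} \;\in\; C_0([0,1]),
\]
so that $B_d(i/d) = d^{-1/2}(V^{(1)} + \cdots + V^{(i)})$ with linear interpolation between mesh points. This is precisely the canonical polygonal Donsker interpolation of a random walk with standard Gaussian increments. By Donsker's invariance principle, applied to the i.i.d.\ mean-zero unit-variance summands $V^{(k)}$, the law of $B_d$ on $C_0([0,1])$ equipped with the sup-norm topology converges weakly to the Wiener measure as $d \to \infty$.

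The bridge back to $\Sigma_d(w)$ is the algebraic identity $\Sigma_d(w) = \rho_d \cdot B_d$, where $\rho_d := \sqrt{d}/\|V\|$. The strong law of large numbers applied to the i.i.d.\ summands $(V^{(k)})^2$ (each with mean $1$) gives $\|V\|^2/d \to 1$ almost surely, hence $\rho_d \to 1$ almost surely and a fortiori in probability. Weak convergence of $\{B_d\}$ implies tightness in $C_0([0,1])$, so $\{\|B_d\|_\infty\}$ is stochastically bounded; consequently
\[
\|\Sigma_d(w) - B_d\|_\infty \;=\; |\rho_d - 1|\cdot \|B_d\|_\infty \;\longrightarrow\; 0
\]
in probability. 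Slutsky's lemma in the Polish space $C_0([0,1])$ then transfers the weak limit from $B_d$ to $\Sigma_d(w)$, yielding the Wiener measure as the weak limit of $m_d \circ \Sigma_d^{-1}$.

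The main obstacle in this route is not substantive but organizational: one must verify that the concrete polygonal interpolation encoded in $\Sigma_d$ agrees with the canonical interpolation used in the standard formulation of Donsker's theorem (in particular the handling of the final segment up to $t_d = 1$), and one must be careful that the comparison between $\Sigma_d(w)$ and $B_d$ is carried out in the sup-norm rather than merely pointwise, in order to conclude weak convergence on $C_0([0,1])$. Once these routine checks are performed, the entire argument reduces to two classical ingredients---Donsker's theorem and the strong law of large numbers---glued together by the Gaussian coupling $w = V/\|V\|$.
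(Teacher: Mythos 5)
Your argument is correct, but it takes a genuinely different route from the one the paper invokes. The paper does not prove this theorem itself: it cites Theorem 2.4 of Cutland--Ng [CN], whose proof goes through nonstandard analysis (the internal uniform measure on the ``Wiener sphere'' $\mathbf{S}\subset\leftexp{*}{\mathbf{R}}^{d_{ns}}$, its Loeb extension, and the standard-part map $\Sigma_\infty$; see Theorem 3.2 of the paper). By contrast, you bypass nonstandard methods entirely by exploiting the standard Gaussian coupling $w = V/\lVert V\rVert$ with $V\sim \mathcal{N}(0,I_d)$, the linearity of $\Sigma_d$ (so $\Sigma_d(w) = \rho_d\, B_d$ with $\rho_d = \sqrt{d}/\lVert V\rVert$ and $B_d = \Sigma_d(V)/\sqrt{d}$), Donsker's invariance principle for $B_d$, the strong law of large numbers for $\rho_d\to 1$, and the converging-together lemma (what you call Slutsky in $C_0([0,1])$) to transfer the limit. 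Each step is sound: $\Sigma_d$ is indeed linear, the polygonal interpolation used here agrees exactly with the canonical one in Donsker's theorem (including the final segment terminating at $t_d=1$), tightness of $\{B_d\}$ follows from its weak convergence and gives stochastic boundedness of $\lVert B_d\rVert_\infty$, and the sup-norm estimate $\lVert \Sigma_d(w)-B_d\rVert_\infty = |\rho_d-1|\cdot\lVert B_d\rVert_\infty\to 0$ in probability is exactly what the converging-together lemma requires. The trade-off is clear: Cutland--Ng's nonstandard construction yields a richer object (an explicit hyperfinite representation of Wiener measure via a single Loeb space, from which the weak-convergence statement falls out as a corollary), whereas your argument is shorter, self-contained, and accessible to anyone who knows Donsker's theorem, but produces only the weak-convergence statement itself. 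For the purpose of justifying the discrete-time simulation in Section~3, your proof suffices; for the stronger pathwise identification used in Theorem~3.2 (the internal process $B(\cdot,\cdot)$ being literally a Wiener process under the Loeb measure), the nonstandard machinery of [CN] is still needed.
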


\bigskip

The measure $m_d \circ \Sigma_d^{-1}$ could thus be regarded as a discrete time simulation of the Wiener measure on $C_0([0,1])$.

\bigskip

Now if $\{W_n(\cdot)\}_{n \geq 0}$ is a sequence of independent identically distributed random variables on a probability space $\Omega$ with values in $S^{d-1}$, with uniform distribution with respect to $S^{d-1}$, then by the Weyl Criterion for uniform distribution plus the Strong Law of Large Numbers, a sample sequence $\{W_n(\omega)\}_{n \geq 0}$ of $\{W_n (\cdot)\}_{n \geq 0}$ (for $\omega \in \Omega$), is almost surely, a sequence in $S^{d-1}$ with uniform distribution with respect to $S^{d-1}$ (cf. [KN], Chapter 3, Theorem 2.2, in the general setting of compact Hausdorff topological space with countable base, with respect to Borel probability measure); as in the previous subsection, this could be simulated by a sequence $\{w_n\}_{n \geq 0}$ of pseudorandom vectors in $S^{d-1}$ with uniform distribution with respect to $S^{d-1}$. By taking $d$ to be a large integer, the sequence $\{ \Sigma_d (w_n) \}_{n \geq 0}$ could be considered as discrete time simulation of uniformly distributed sample path sequence, of a sequence of independent standard Wiener processes. Here since we are dealing with discrete time simulation, uniform distribution here is in fact meant to be with respect to discrete time simulation of the Wiener measure, namely $m_d \circ \Sigma_d^{-1}$. 

\bigskip
To justify this we use the more precise form of the result of Cutland-Ng [CN], which used the tools of nonstandard analysis. We refer to [Go] for the background on nonstandard analysis.

\bigskip

Let $\leftexp{*}{\mathbf{R}}$ be nonstandard extension of $\mathbf{R}$, and for $x \in \leftexp{*}{\mathbf{R}}$ define $\leftexp{\circ}{x}$ to be the standard part of $x$. This means that, if $x$ is finite, then $\leftexp{\circ}{x}$ is the unique element in $\mathbf{R}$ that is infinitesimally close to $x$; otherwise if $x$ is not finite, then we simply define $\leftexp{\circ}{x}$ to be $\pm \infty$. Fix a nonstandard infinite integer $d_{ns}$. Inside the $\leftexp{*}{\mbox{Euclidean}}$ space $\leftexp{*}{\mathbf{R}}^{d_{ns}}$, denote by $\mathbf{S}  \subset \leftexp{*}{\mathbf{R}}^{d_{ns}}$ the internal subset of  $\leftexp{*}{\mathbf{R}}^{d_{ns}}$ consisting of elements whose norm is equal to one. In [CN] the internal set $\mathbf{S}$ is referred to as the Wiener sphere. 

\bigskip

For $w \in \mathbf{S}$, define the internal polygonal path $\Sigma_{d_{ns}}(w): \leftexp{*}{ [0,1]} \rightarrow \leftexp{*}{\mathbf{R}}$ in a similar way as before: put $\tau_i = i/d_{ns}$ for $i=0,1,\cdots,d_{ns}$, then:
\begin{eqnarray*}
(\Sigma_{d_{ns}} (w))(0)  &=& 0 \\
(\Sigma_{d_{ns}} (w))(\tau_i)  &=& \sum_{k=1}^{i} w^{(k)} \mbox{ for } i=1,\cdots, d_{ns}
\end{eqnarray*}
and in general for $\tau \in \leftexp{*}{[0,1]}$, the value $(\Sigma_{d_{ns}} (w))(\tau)$ is linearly interpolated between $(i-1)/d_{ns}  \leq \tau \leq  i/d_{ns}$ for $i=1,\cdots,d_{ns} $.

\bigskip

Define the internal map:
\[
B: \mathbf{S} \times  \leftexp{*}{ [0,1]} \rightarrow \leftexp{*}{\mathbf{R}}
\]
by the rule: for $w \in \mathbf{S}$ and $\tau \in \leftexp{*}{[0,1]}$:
\[
B(w,\tau) = (\Sigma_{d_{ns}}(w))(\tau).
\]
Put $b(w,t) := \leftexp{\circ}{B(w,t)}$ for $w \in \mathbf{S}$ and $t \in [0,1]$. 

\bigskip

Now denote by $\mathbf{m}$ the uniform (i.e. rotationally invariant) internal probability measure on $\mathbf{S}$, and by $\mathbf{m}_L$ its Loeb extension; thus $\mathbf{S}$ equipped with the Loeb measure $\mathbf{m}_L$ is a probability space in the usual sense, and $b(\cdot,\cdot)$ is a stochastic process. 

\bigskip

We then have:
\begin{theorem} (Theorem 2.1 and Corollary 2.2 of [CN])
We have, for almost all $w \in \mathbf{S}$ (with respect to the Loeb measure $\mathbf{\mu}_L$), that the sample path $b(w,\cdot)$ defines an element in $C_0([0,1])$. Thus by restricting to a subset $\mathbf{S}^{\prime} \subset \mathbf{S}$ with $\mathbf{S} \backslash \mathbf{S}^{\prime}$ being of measure zero (with respect to $\mathbf{m}_L$), we have a map:
\begin{eqnarray*}
\Sigma_{\infty}: \mathbf{S}^{\prime} \rightarrow C_0([0,1]) 
\end{eqnarray*}
\begin{eqnarray*}
& & (\Sigma_{\infty}(w))(t) : = b(w,t) \\
&=& \leftexp{\circ}{\big( (\Sigma_{d_{ns}}(w))(t) \big) } ,   \,\ w \in \mathbf{S}^{\prime} , \,\  t \in [0,1].
\end{eqnarray*}
The map $\Sigma_{\infty}$ is measurable. Furthermore, the measure on $C_0([0,1])$ given by the push-forward of $\mathbf{m}_L$ by $\Sigma_{\infty}$:
\[
\mathbf{m}_L \circ (\Sigma_{\infty})^{-1}
\] 
is the Wiener measure on $C_0([0,1])$. In addition $b(\cdot,\cdot)$ is a standard Wiener process (whose distribution law is the Wiener measure). 
\end{theorem}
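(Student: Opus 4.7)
The plan is to follow the strategy of Cutland--Ng [CN], which rests on three pillars: (a) establishing $S$-continuity of the internal polygonal paths $B(w,\cdot)$ for $\mathbf{m}_L$-almost every $w \in \mathbf{S}$, so that the standard part map makes $b(w,\cdot)$ land in $C_0([0,1])$; (b) identifying the finite-dimensional distributions of $b(\cdot,\cdot)$ as those of a standard Wiener process; (c) combining these to identify the push-forward measure $\mathbf{m}_L \circ \Sigma_\infty^{-1}$ with the Wiener measure on $C_0([0,1])$.

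First I would work out the moments of the coordinates of a uniformly distributed point on $S^{d_{ns}-1}$. By rotational invariance of the internal probability measure $\mathbf{m}$, one has internal identities $E[(w^{(k)})^2] = 1/d_{ns}$, $E[w^{(j)} w^{(k)}] = 0$ for $j \ne k$, and $E[(w^{(k)})^4] = O(d_{ns}^{-2})$. Hence the internal partial sum $B(w,\tau_i) = \sum_{k=1}^{i} w^{(k)}$ has internal variance $i/d_{ns} \approx \tau_i$, which is the correct covariance of a standard Wiener process at time $\tau_i$. One then derives, from the spherical fourth-moment estimate and the near-orthogonality of distinct coordinates, a Kolmogorov-type bound of the form $E[|B(w,\tau) - B(w,\sigma)|^4] \le C (\tau - \sigma)^2$ valid internally for all $\sigma, \tau$ on the hyperfinite grid. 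The $\leftexp{*}{}$-Borel--Cantelli argument then yields $S$-continuity of $B(w,\cdot)$ on $\leftexp{*}{[0,1]}$ for $\mathbf{m}_L$-almost every $w$; discarding the corresponding Loeb-null set produces the required $\mathbf{S}^{\prime}$, and $b(w,t) = \leftexp{\circ}{B(w,t)}$ is then a well-defined element of $C_0([0,1])$.

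Next I would verify the finite-dimensional distributions. Fix standard times $0 = t_0 < t_1 < \cdots < t_N \le 1$ and let $\tau_{i_\ell}$ be the grid point of the form $i/d_{ns}$ nearest $t_\ell$. The classical Poincar\'e observation, in its quantitative form, says that for a uniformly distributed point $w$ on a sphere of very high dimension, any standard-finite collection of coordinate sums is asymptotically jointly Gaussian, with covariance prescribed by the inner products of the corresponding indicator vectors in $\leftexp{*}{\mathbf{R}}^{d_{ns}}$. Transferring this to the nonstandard setting and then passing through the standard part map shows that the increments $b(w,t_\ell) - b(w,t_{\ell-1})$ are independent centered Gaussians with variances $t_\ell - t_{\ell-1}$. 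Combined with the $S$-continuity from the previous step, this proves that $b(\cdot,\cdot)$, on the Loeb probability space $(\mathbf{S}^{\prime}, \mathbf{m}_L)$, is a standard Wiener process. Measurability of $\Sigma_\infty$ follows routinely from Loeb-measurability of $B$ together with continuity of the standard part map on the $S$-continuous domain.

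The main obstacle will be the $S$-continuity step: the spherical moment estimates are classical, but converting them into a statement that the set of $w$ whose paths fail $S$-continuity is $\mathbf{m}_L$-null demands a careful hyperfinite Kolmogorov-type argument (or, alternatively, an explicit L\'evy modulus-of-continuity bound), with attention to the interplay of internal versus Loeb-measurable statements. Once $S$-continuity is secured, Theorem 3.1 drops out as a corollary, since $\mathbf{m}_L \circ \Sigma_\infty^{-1}$ is then supported on $C_0([0,1])$ and has the finite-dimensional distributions of Wiener measure, so by Kolmogorov's consistency theorem it \emph{is} the Wiener measure, completing all four assertions.
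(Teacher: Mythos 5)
The paper does not prove this result: it states it as a direct citation of Theorem 2.1 and Corollary 2.2 of Cutland--Ng [CN] and uses it as a black-box input, so there is no proof in the paper against which to compare yours. That said, your reconstruction does follow the outline of the argument in [CN]: rotational invariance of $\mathbf{m}$ gives the internal moments $E[(w^{(k)})^2]=1/d_{ns}$, $E[w^{(j)}w^{(k)}]=0$ for $j\ne k$, and $E[(w^{(k)})^4]=3/(d_{ns}(d_{ns}+2))$, from which one obtains the internal fourth-moment estimate $E[|B(w,\tau)-B(w,\sigma)|^4]\le 3(\tau-\sigma)^2$ on the hyperfinite grid; a hyperfinite Kolmogorov (or L\'evy modulus) argument then yields $S$-continuity of $B(w,\cdot)$ for $\mathbf{m}_L$-almost every $w$, giving the co-null set $\mathbf{S}^{\prime}$, and the Poincar\'e projection phenomenon identifies the finite-dimensional distributions of the increments as those of standard Brownian motion. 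One imprecision worth flagging: your final appeal to ``Kolmogorov's consistency theorem'' is not the right tool for identifying $\mathbf{m}_L\circ\Sigma_\infty^{-1}$ with Wiener measure, since that theorem constructs a measure on a product space from a projective family; what one actually uses is that a Borel probability measure on $C_0([0,1])$ in the sup-norm topology is uniquely determined by its finite-dimensional distributions, because the cylinder sets generate the Borel $\sigma$-algebra. With that substitution your sketch is a correct account of the [CN] proof, but be aware the present paper itself supplies no argument and relies entirely on the citation.
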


\bigskip

Note that a sample path $b(w,\cdot)$ for $w \in \mathbf{S}^{\prime}$, of the standard Wiener process $b(\cdot,\cdot)$, is $\Sigma_{\infty}(w)$. In addition, it also follows from Theorem 3.2 that, if $\{\mathbf{W}_n (\cdot )\}_{n \geq 0}$ is a sequence of independent identically distributed random variables on a probability space $\Omega$, with values in $ \mathbf{S}^{\prime}$ with uniform distribution with respect to the Loeb measure $\mathbf{m}_L$ of $\mathbf{S}$, then $\{b(\mathbf{W}_n(\cdot) ,\cdot) \}_{n \geq 0}$ is a sequence of independent standard Wiener processes, whose sample path sequence is $\{\Sigma_{\infty}(\mathbf{W}_n(\omega))\}_{n \geq 0}$, for $\omega \in \Omega$.

\bigskip

Now having recalled the results of [CN], we finally let $d$ be a large integer (simulation of a nonstandard infinite integer), and as in the end of the previous subsection, let $\{w_n\}$ be a sequence of pseudorandom vectors in $S^{d-1}$ with uniform distribution with respect to $S^{d-1}$ (simulation of a sample sequence of a sequence of independent identically distributed random variables, with values in $S^{d-1}$ with uniform distribution). For $n \geq 0$, define $\mathcal{B}_{n} = \Sigma_d(w_n) \in C_0([0,1])$. The previous discussion thus justifies the procedure of taking the sequence $\{ \mathcal{B}_{n} \}_{n \geq 0} \subset C_0([0,1])$ as discrete time simulation of uniformly distributed sample path sequence, of a sequence of independent standard Wiener processes (uniform distribution with respect to discrete time simulation of the Wiener measure).  

\begin{remark}
\end{remark}
\noindent The theorem of Cutland-Ng [CN] can be extended directly to the case of standard Wiener processes in $\mathbf{R}^{D}$ (for $D \in \mathbf{Z}_{\geq 1}$), by working with the Cartesian product of $D$ copies of the Wiener sphere $\mathbf{S}$. Our algorithm can thus be extended to this setting as well. Specifically, fix a large integer $d$ (again simulation of a nonstandard infinite integer). Let $\{u_n\}_{n \geq 0} \subset [0,1]^{D d }$ be sequence of uniform pseudorandom vectors as in section 2. Apply either the inverse transform method or the Box-Muller method to obtain sequence $\{v_n\}_{n \geq 0} \subset \mathbf{R}^{D d}$ of Gaussian pseudorandom vectors, with standard normal distribution on $\mathbf{R}^{Dd}$. With $v_n=(v_n^{(1)},\cdots, v_n^{(Dd)} ) \in \mathbf{R}^{D d}$, put for $j =1,\cdots,D$ and $n \geq 0$:
\[
v_{j,n} = ( v_n^{(1+ (j-1)d)} , \cdots, v_n^{(d+ (j-1)d)})  \in \mathbf{R}^d
\]
(thus in particular for each $j=1,\cdots,D$, the sequence $\{v_{j,n}\}_{n \geq 0} \subset \mathbf{R}^d$ is sequence of Gaussian pseudorandom vectors, with standard normal distribution on $\mathbf{R}^d$), and put $w_{j,n} = v_{j,n}/|| v_{j,n}|| \in S^{d-1}$. For $n \geq 0$, define the continuous piecewise linear map $\mathcal{B}^{D}_{n}: [ 0,1] \rightarrow \mathbf{R}^D$ by:
\[
\mathcal{B}^{D}_{n }(t) = \big((\Sigma_d(w_{1,n} ) )(t),\cdots,(\Sigma_d(w_{D,n}))(t) \big) \in \mathbf{R}^D, \,\ t \in [0,1].
\]
Then the sequence $\{\mathcal{B}^{D}_{ n }\}_{n \geq 0}$ gives discrete time simulation of uniformly distributed sample path sequence, of a sequence of independent standard Wiener processes in $\mathbf{R}^D$ (on the time interval $[0,1]$).

\section{Conclusion}

In this paper we present, using the arithmetic of elliptic curves over finite fields, an efficient algorithm for the generation of sequence uniform pseudorandom vectors in the unit hypercube. Criterion for the algorithm to generate sequences with maximum period is also given. 
\bigskip

We have shown how these could be transformed to construct, discrete time simulation of uniformly distributed sample path sequence, of a sequence of independent standard Wiener processes.  In a Monte Carlo style, these could be used for the numerical evaluation of expectation values against the Wiener measure, for example those occurring in Feynman-Kac type formulas. Examples of Monte Carlo integration, based on the algorithm of this paper, is the subject of the paper [MZ].

\bigskip
For the class of semilinear parabolic partial differential equations of the Kolmogorov type, their viscosity solutions have stochastic representation given by non-linear Feynman-Kac formulas ([BHJ]). In the full history recursive multi-level Picard approximation (MLP) method, these non-linear Feynman-Kac formulas could be evaluated numerically, using as input a denumerable set of independent standard Wiener processes (see for example [EHJK1], [EHJK2], [HJvW], [HK], [HJKNvW]). 

\bigskip
Our construction of discrete time simulation of uniformly distributed sample path sequence of a sequence of independent Wiener processes, using the algorithm as given in sections 2 and 3 of this paper, could thus be employed as inputs for the MLP method in the numerical approximation of solutions to these class of equations. Explicit numerical studies will be the subject of a future investigation. 

\section{Appendix}

In this appendix, we establish the inequalities (2.9), (2.10) and (2.11). There are two ingredients: Theorem 1 and Corollary 4 of [He], concerning the general discrepancy estimates with respect to the base $p$ Walsh function system, and the exponential sum estimates of [KS]. We first recall the former. 

\bigskip
As in section 2, we have $p$ is a prime. First recall the base $p$ Walsh functions $w_k: [0,1) \rightarrow \mathbf{C}^{\times}$ for $k \in \mathbf{Z}_{\geq 0}$. 

\bigskip
For $k \in \mathbf{Z}_{\geq 0}$, let
\[
k = \sum_{i=1}^{\infty} k(i)  p^{i-1}, \,\ k(i) \in \{0,1,\cdots,p-1\}
\]
be the unique expansion of $k$ in base $p$ (all but finitely many of the $k(i)$'s are equal to zero). Every number $\xi \in [0,1)$ has a unique base $p$ expansion:
\[
\xi =\sum_{i=1}^{\infty} \xi(i) p^{-i}, \,\ \xi(i) \in \{0,1,\cdots,p-1\}
\]
with the condition that $\xi(i) \neq p-1$ for infinitely many $i$. Then define:
\begin{eqnarray*}
w_k(\xi) = \exp\Big(\frac{2 \pi \sqrt{-1}}{p} \big(\sum_{i=1}^{\infty} k(i) \xi(i) \big)  \Big)
\end{eqnarray*}
(remark that [He] considers Walsh functions with respect to base that is not necessarily a prime, but this is good enough for our purpose).

\bigskip
More generally, for $h \in \mathbf{Z}_{\geq 1}$ and $ \mathbf{k} =(k^{(1)},\cdots,k^{(h)}) \in (\mathbf{Z}_{\geq 0})^h$, define for $\stackrel{\rightarrow}{\xi} =(\xi^{(1)},\cdots,\xi^{(h)}) \in [0,1)^h$: 
\[
w_{\mathbf{k}} (\stackrel{\rightarrow}{\xi}) = \prod_{\iota=1}^h w_{k^{(\iota)}}(\xi^{(\iota)}).
\]

Now let $ \mathcal{M}= \{\stackrel{\rightarrow}{\xi}_0,\cdots,\stackrel{\rightarrow}{\xi}_{L-1} \}$ be a set of $L$ distinct points in $[0,1)^h$, define:
\begin{eqnarray*}
S_{L}(w_{\mathbf{k}},\mathcal{M}) = \frac{1}{L}\big( w_{\mathbf{k}} (\stackrel{\rightarrow}{\xi}_0) + \cdots + w_{ \mathbf{k}} (\stackrel{\rightarrow}{\xi}_{L-1}) \big).
\end{eqnarray*}

\bigskip
Fix $a \in \mathbf{Z}_{\geq 1}$. We assume that $p^a \cdot \stackrel{\rightarrow}{\xi}_0,\cdots,  p^a \cdot \stackrel{\rightarrow}{\xi}_{L-1}$ belong to $\mathbf{Z}^h$. In addition define $\Delta_h \subset (\mathbf{Z}_{\geq 0})^h$ to be the following:
\begin{eqnarray*}
\Delta_h=\{ \mathbf{k} = (k^{(1)} ,\cdots, k^{(h)}) \in (\mathbf{Z}_{\geq 0})^h, \,\  0 \leq k^{(\iota)} < p^a  \mbox{ for } \iota=1,\cdots,h  \}.
\end{eqnarray*}
Put $\Delta^{*}_h = \Delta_h \backslash \{(0,\cdots,0)\}$.

\bigskip
We employ Theorem 1 of [He] to estimate the discrepancy of the set $\mathcal{M} \subset [0,1)^h$; in fact for our purpose, we only need to use Corollary 4 of {\it loc. cit.} in our present setting:

\begin{proposition} (Corollary 4 of [He]) With hypotheses as above, assume in addition that there is some for some $B \in \mathbf{R}_{\geq 0}$, such that for all $\mathbf{k} \in \Delta^{*}_h$, we have the bound:
\begin{eqnarray*}
|S_L(w_{\mathbf{k}} ,\mathcal{M} )| \leq B ,
\end{eqnarray*}
\noindent then the discrepancy $\mathcal{D}(\mathcal{M})$ of the set $\mathcal{M}=\{ \stackrel{\rightarrow}{\xi}_0, \cdots, \stackrel{\rightarrow}{\xi}_{L-1}  \} \subset [0,1)^h$ satisfies:
\begin{eqnarray}
\mathcal{D}(\mathcal{M}) \leq 1- \big(  1 - \frac{1}{p^a}   \big)^h + B \big(  2.43 \ln(p^a) +1     \big)^h.
\end{eqnarray}
\end{proposition}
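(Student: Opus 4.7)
The plan is to follow the standard Walsh-function analogue of the Erd\H{o}s-Tur\'an-Koksma inequality, in three steps: $p^a$-adic discretization of the test box, exact finite Walsh expansion of its indicator, and an $\ell^1$ estimate on the Walsh coefficients reducing to a one-dimensional computation.

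First I would approximate an arbitrary test box $\mathcal{B} = \prod_{\iota=1}^{h}[\mu_\iota,\nu_\iota) \subset [0,1)^h$ from inside and outside by boxes $\mathcal{B}^-,\mathcal{B}^+$ whose coordinate endpoints lie in $p^{-a}\mathbf{Z}$, by rounding each $\mu_\iota,\nu_\iota$ to the nearest $p^a$-adic point in the appropriate direction. A product expansion shows $\vol(\mathcal{B}^+)-\vol(\mathcal{B}^-) \le 1 - (1-1/p^a)^h$, which yields the first term on the right-hand side of (4.1). Because every point of $\mathcal{M}$ has coordinates in $p^{-a}\mathbf{Z}$, the counts $\#(\mathcal{B}^{\pm}\cap\mathcal{M})$ sandwich $\#(\mathcal{B}\cap\mathcal{M})$ (after choosing half-open conventions consistently), so it suffices to bound $\bigl|\#(\mathcal{B}'\cap\mathcal{M})/L - \vol(\mathcal{B}')\bigr|$ for a single $p^a$-adic box $\mathcal{B}'$. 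Since $\mathbf{1}_{\mathcal{B}'}$ is constant on each cell of the partition of $[0,1)^h$ into cubes of side $1/p^a$, and the span of $\{w_{\mathbf{k}}\}_{\mathbf{k}\in\Delta_h}$ is precisely the space of such step functions, there is an exact finite expansion
\[
\mathbf{1}_{\mathcal{B}'} = \sum_{\mathbf{k}\in\Delta_h} c_{\mathbf{k}}\, w_{\mathbf{k}}, \qquad c_{\mathbf{0}} = \vol(\mathcal{B}').
\]
Averaging this identity over the points of $\mathcal{M}$ and subtracting the volume gives
\[
\frac{\#(\mathcal{B}'\cap\mathcal{M})}{L} - \vol(\mathcal{B}') \;=\; \sum_{\mathbf{k}\in\Delta_h^{*}} c_{\mathbf{k}}\, S_L(w_{\mathbf{k}},\mathcal{M}),
\]
whose modulus is bounded by $B\sum_{\mathbf{k}\in\Delta_h^{*}}|c_{\mathbf{k}}|$ by the hypothesis.

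The third and crucial step is the $\ell^1$ estimate of the Walsh coefficients. By the product structures $w_{\mathbf{k}} = \prod_\iota w_{k^{(\iota)}}$ and $\mathbf{1}_{\mathcal{B}'} = \prod_\iota \mathbf{1}_{[\mu'_\iota,\nu'_\iota)}$, the $\ell^1$ sum factors as $\prod_{\iota=1}^{h}\sum_{k=0}^{p^a-1}|c^{(\iota)}_k|$, reducing everything to a one-dimensional estimate for the Walsh spectrum of an interval indicator with $p^a$-adic endpoints. An explicit evaluation of these coefficients and a harmonic-type summation (as carried out in the proof of Corollary~4 of [He]) yields the one-dimensional bound $\sum_{k=0}^{p^a-1}|c^{(\iota)}_k| \le 2.43\ln(p^a)+1$; raising to the $h$-th power produces the claimed factor $(2.43\ln(p^a)+1)^h$. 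Combining with the discretization error completes the proof. The main obstacle is precisely this last step: the first two are formal, but extracting the explicit numerical constant $2.43$ requires the delicate Vaaler--Beurling-style analysis of the Walsh coefficients of a step function performed in [He], which is the real content being invoked here.
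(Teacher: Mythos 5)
This proposition is not proved in the paper; it is quoted verbatim as Corollary 4 of Hellekalek's paper [He], and the paper's only justification is the citation. So there is no internal argument to compare against, and the question is whether your reconstruction of Hellekalek's argument is sound.

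Your three-step outline --- $p^a$-adic discretization, exact finite Walsh expansion of the box indicator, and an $\ell^1$ bound on the coefficients --- is indeed the structure of the Walsh-function analogue of the Erd\H{o}s--Tur\'an--Koksma inequality, and steps two and three are correct (with the numerical constant $2.43$ being precisely the content one must import from [He], which is fine since the paper itself only cites it). But the first step as you describe it gives a weaker constant than stated, and this is a genuine gap. Sandwiching $\mathcal{B}^- \subseteq \mathcal{B} \subseteq \mathcal{B}^+$ by rounding $\mu_\iota$ up/down and $\nu_\iota$ down/up moves the two endpoints of each edge in opposite directions, so an edge of $\mathcal{B}^+$ exceeds the corresponding edge of $\mathcal{B}^-$ by as much as $2/p^a$; the resulting volume gap is only $\le 1 - (1-2/p^a)^h$, not $1-(1-1/p^a)^h$. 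To obtain the stated constant one must exploit the hypothesis $p^a\cdot\mathcal{M}\subset\mathbf{Z}^h$ more fully: for a point $\xi$ with $p^a\xi^{(\iota)} \in \mathbf{Z}$ one has $\xi^{(\iota)}\in[\mu_\iota,\nu_\iota)$ if and only if $\xi^{(\iota)}\in[\lceil p^a\mu_\iota\rceil/p^a,\lceil p^a\nu_\iota\rceil/p^a)$, so replacing $\mathcal{B}$ by the single $p^a$-adic box $\mathcal{B}''$ obtained by rounding \emph{both} endpoints up in each coordinate leaves the count $\#(\mathcal{B}\cap\mathcal{M})$ unchanged. Since both endpoints of each edge move in the same direction, the edge length changes by at most $1/p^a$, and the elementary inequality $|\prod a_\iota - \prod b_\iota| \le 1-(1-\delta)^h$ for $a_\iota,b_\iota\in[0,1]$, $|a_\iota-b_\iota|\le\delta$, gives exactly the discretization term $1-(1-1/p^a)^h$. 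No two-sided sandwiching is needed, and in fact it is the two-sidedness that costs you the extra factor of $2$.
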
 

\bigskip
With $a$ as before, now as in section 2.3, let $ r \in \mathbf{Z}_{\geq 1 }$. Put $m=a \cdot r$ and $q=p^m$. Let $F$ be a finite field with cardinality $q$, and $K$ be the unique subfield of $F$ with cardinality $p^a=q^{1/r}$. Let $E$ be an elliptic curve over $F$, with $x,y$ being the affine Weierstrass coordinates, and $Q \in E(F)$ be a point of order $t$. Given an initial state $P_0 \in E(F)$, define $\{P_n\}_{n \geq 0}$ recursively by the rule: $P_{n+1} = P_n +Q$ for $n \geq 0$. Thus we have $P_n = [n](Q) + P_0$ and $P_{n+n^{\prime}} = [n](Q)  +P_{n^{\prime}}$ for $n,n^{\prime} \geq 0$, and $t$ is the period of the sequence $\{P_n\}_{n \geq 0}$. Fixing a basis $\mathfrak{a}=\{\kappa_1,\cdots,\kappa_a\}$ of the extension $K/\mathbf{F}_p$, with dual basis $\mathfrak{a}^{\prime} =\{\kappa_1^{\prime} ,\cdots,\kappa_a^{\prime}\}$ with respect to $Tr_{K/\mathbf{F}_p}$, and a basis $\mathfrak{b}=\{\lambda_1,\cdots,\lambda_r\}$ of the extension $F/K$ with dual basis $\mathfrak{b}^{\prime}= \{\lambda_1^{\prime}  ,\cdots,\lambda_r^{\prime}\}$ with respect to $Tr_{F/K}$, the output function $G: E(F) \rightarrow [0,1]^{2r}$ is defined as in section 2.3. From the definition of $G$, we have that $p^a \cdot G(P) \in \mathbf{Z}^{2r}$ for any $P \in E(F)$.

\bigskip

To establish (2.9), we take $h=2r$, and $\mathcal{M} \subset [0,1)^{2r}$ to be the set of points $G(P_n)$ for $n=0,1,\cdots,t-1$, with the point $G(P_n)$ being discarded if it is equal to $(1,\cdots,1)$ (i.e. if $P_n=\mathbf{O}$). The cardinality of $\mathcal{M}$ is equal to $t^{\prime}$ (which is equal to $t-1$ if $P_n=\mathbf{O}$ for some $n$, and is equal to $t$ otherwise). Applying Proposition 5.1, we see that to establish (2.9), it suffices to show:
\begin{eqnarray}
|S_{t^{\prime}}(w_{\mathbf{k}},\mathcal{M})| \leq \frac{4 q^{1/2}}{t^{\prime}},  \mbox{ for all } \mathbf{k} \in \Delta^{*}_{2r}.
\end{eqnarray}

\bigskip
To establish (5.2), we first compute the terms in the definition of $S_{t^{\prime}}(w_{\mathbf{k}},\mathcal{M})$. Recall that for $P \in E(F), P \neq \mathbf{O}$, the definition of the vector $G(P) \in [0,1)^{2r}$ is given by:
\begin{eqnarray*}
& & G(P) \\
&  = &\Big(\Phi(  \langle x(P) \rangle_1)  , \cdots, \Phi(  \langle x(P) \rangle_r) ,\Phi(  \langle y(P) \rangle_1),  \cdots ,\Phi(  \langle y(P) \rangle_r) \Big ).
\end{eqnarray*}
In addition for $1 \leq j \leq r$ and any $\eta \in F$, we have as in (2.8):
\begin{eqnarray*}
\Phi(\langle  \eta \rangle_j) = \sum_{i=1}^a \frac{Tr_{F/\mathbf{F}_p} (\eta \lambda^{\prime}_j \kappa_i^{\prime} ) }{p^i}.
\end{eqnarray*}

\bigskip
Thus for $\mathbf{k}= (k^{(1)},\cdots,k^{(2r)}  ) \in \Delta^{*}_{2r}$, and $P_n \neq \mathbf{O}$, we have: 
\begin{eqnarray*}
& & w_{\mathbf{k}}(G(P_n)) \\ &= &\exp \Big(       \frac{2 \pi \sqrt{-1}}{p}  \big(   \sum_{j=1}^r \sum_{i=1}^a   k^{(j)}(i) Tr_{F/\mathbf{F}_p}( x(P_n)   \lambda_j^{\prime} \kappa^{\prime}_i)   \big) \Big)  \\ & &   \times    \exp \Big(       \frac{2 \pi \sqrt{-1}}{p}  \big(      \sum_{j=1}^r \sum_{i=1}^a  k^{(r+j)}(i)  Tr_{F/\mathbf{F}_p}( y(P_n)   \lambda_j^{\prime} \kappa^{\prime}_i)   \big)    \Big) \\
&=& \exp \Big(  \frac{2 \pi \sqrt{-1}}{p}  Tr_{F/\mathbf{F}_p}  \big(   \eta_{\mathbf{k}} \cdot  x(P_n)  +   \overline{\eta}_{\mathbf{k}}\cdot   y(P_n)      \big) \Big)
\end{eqnarray*}
where
\begin{eqnarray}
\eta_{\mathbf{k}} =  \sum_{j=1}^r \sum_{i=1}^a   k^{(j)}(i)  \lambda_j^{\prime} \kappa^{\prime}_i , \,\ \overline{\eta}_{\mathbf{k}} =  \sum_{j=1}^r \sum_{i=1}^a   k^{(r+j)}(i)  \lambda_j^{\prime} \kappa^{\prime}_i
\end{eqnarray}
are elements of $F$. And as $\{ \lambda_j^{\prime} \kappa^{\prime}_i\}_{ 1\leq i \leq a , 1\leq j \leq r}$ is a basis of the extension $F/\mathbf{F}_p$, and $\mathbf{k}=(k^{(1)}, \cdots, k^{(2r)}) \neq (0,\cdots,0)$ we see that at least one of $\eta_{\mathbf{k}}, \overline{\eta}_{\mathbf{k}}$ is non-zero. 

\bigskip
Thus we see that to show (5.2), we need to establish for all $\mathbf{k} \in \Delta^{*}_{2r}$:
\begin{eqnarray}
\end{eqnarray}
\begin{eqnarray}
\Big|    \sum_{ \substack{   n=0,1,\cdots , t-1 \\ P_n \neq \mathbf{O}}}  \exp \Big(\frac{2 \pi \sqrt{-1}}{p}  Tr_{F/\mathbf{F}_p}  \big(  \eta_{\mathbf{k}}\cdot  x(P_n)   + \overline{\eta}_{\mathbf{k}} \cdot  y(P_n)   \big)    \Big)       \Big| \leq 4 q^{1/2}. \nonumber
\end{eqnarray}

\bigskip

Similarly to establish (2.10) and (2.11), consider $2 \leq s \leq t$ and take $h=2rs$. First consider the collection of points $\mathcal{N} \subset [0,1)^{2rs}$, given by the vectors:
\[
\big( G(P_n),G(P_{n+1}),\cdots,G(P_{n+s-1}) \big), \,\ 0 \leq n \leq t-1
\]
regarded as vectors in $[0,1]^{2rs}$, with the vector being discarded if one of the components $G(P_n),G(P_{n+1}),\cdots,G(P_{n+s-1})  $ is equal to $(1,\cdots,1)$. The cardinality of $\mathcal{N}$ is equal to $t^{\prime \prime}$. (which is equal to $t-s$ if $P_n=\mathbf{O}$ for some $n$, and is equal to $t$ otherwise) Applying again Proposition 5.1, we see that to establish (2.10), it suffices to show that for all $\mathbf{k} \in \Delta^{*}_{2rs}$:
\begin{eqnarray}
|S_{t^{\prime \prime}} (w_{\mathbf{k}},\mathcal{N}) | \leq \frac{6 q^{1/2} s}{t^{\prime \prime}}
\end{eqnarray}
under the condition that $p \geq 5$.

\bigskip
By a similar calculation as before, to establish (5.5) it amounts to showing that, under the condition $p \geq 5$, we have for all $\mathbf{k} =(k^{(1)}  ,\cdots,k^{(2rs)}) \in \Delta^{*}_{2rs}$:
\begin{eqnarray}
\end{eqnarray}
\begin{eqnarray}
    \Big|   & \sum & \exp \Big(\frac{2 \pi \sqrt{-1}}{p}  Tr_{F/\mathbf{F}_p}  \big(   \sum_{\iota=0}^{s-1} \eta^{(\iota)}_{\mathbf{k}}\cdot  x(P_{n+\iota})   + \sum_{\iota=0}^{s-1} \overline{\eta}^{(\iota)}_{\mathbf{k}} \cdot  y(P_{n+\iota})   \big)    \Big)       \Big|   \nonumber \\      & \leq &   6 q^{1/2} s    \nonumber
  \end{eqnarray}
here the summation is over $n=0,1,\cdots , t-1$ with $P_n,P_{n+1},\cdots,P_{n+s-1} \neq \mathbf{O}$, and where for $0 \leq \iota \leq s-1$:
\begin{eqnarray}
\,\ \,\ \,\ \,\  \,\ \eta_{\mathbf{k}}^{(\iota)} =  \sum_{j=1}^r \sum_{i=1}^a   k^{(2 r \iota+j)}(i)  \lambda_j^{\prime} \kappa^{\prime}_i , \,\ \overline{\eta}^{(\iota)}_{\mathbf{k}} =  \sum_{j=1}^r \sum_{i=1}^a   k^{( 2 r\iota+r+j)}(i)  \lambda_j^{\prime} \kappa^{\prime}_i
\end{eqnarray}
are elements of $F$. Again, since $\mathbf{k} = (k^{(1)},\cdots, k^{(2rs)}) \neq (0,\cdots,0)$, we have that at least one of the elements in the set $\{ \eta^{(\iota)}_{\mathbf{k}} ,\overline{\eta}^{(\iota)}_{\mathbf{k}} \}_{0 \leq \iota\leq s-1}$ is non-zero.

\bigskip
In an analogous manner, to establish (2.11), consider the collection of points $\widetilde{\mathcal{N}} \subset [0,1)^{2rs}$, given by the vectors:
\[
\big( G(P_{ns}),G(P_{ns+1}),\cdots,G(P_{ns+s-1}) \big), \,\ 0 \leq n \leq \frac{t}{\gcd(s,t)}-1
\]
regarded as vectors in $[0,1]^{2rs}$, with the vector being discarded if one of the components $G(P_{ns}),G(P_{ns+1}),\cdots,G(P_{ns+s-1})  $ is equal to $(1,\cdots,1)$. The cardinality of $\widetilde{\mathcal{N}}$ is equal to $t^{\prime \prime}/ \gcd(s,t)$. Applying Proposition 5.1, we see that to establish (2.11), it suffices to show that for all $\mathbf{k} \in \Delta^{*}_{2rs}$:
\begin{eqnarray}
|S_{t^{\prime \prime}/\gcd(s,t)} (w_{\mathbf{k}},\widetilde{\mathcal{N}}) | \leq \frac{6 q^{1/2} s^3}{t^{\prime \prime}}
\end{eqnarray}
under the conditions $p \geq 5$ and $\gcd(s,p)=1$.

\bigskip

Again by a similar calculation, to establish (5.8) it amounts to showing that, under the conditions $p \geq 5$ and $\gcd(s,p)=1$, we have for all $\mathbf{k} =(k^{(1)}  ,\cdots,k^{(2rs)}) \in \Delta^{*}_{2rs}$:
\begin{eqnarray}
\end{eqnarray}
\begin{eqnarray}
    \Big|    &\sum & \exp \Big(\frac{2 \pi \sqrt{-1}}{p} Tr_{F/\mathbf{F}_p}    \big(   \sum_{\iota=0}^{s-1} \eta^{(\iota)}_{\mathbf{k}}\cdot  x(P_{ns+\iota})   + \sum_{\iota=0}^{s-1} \overline{\eta}^{(\iota)}_{\mathbf{k}} \cdot  y(P_{ns+\iota})   \big)    \Big)       \Big|   \nonumber \\        &\leq &   6 q^{1/2} \frac{s^3}{\gcd(s,t)}    \nonumber
  \end{eqnarray}
here the summation is over $n =0,1,\cdots,\frac{t}{\gcd(s,t)}-1$ with $ P_{ns},P_{ns+1},\cdots,P_{ns+s-1} \neq \mathbf{O}$ (and where $\eta_{\mathbf{k}}^{(\iota)}, \overline{\eta}_{\mathbf{k}}^{(\iota)}$ are as in (5.7)).

\bigskip

Thus to complete the proofs of (2.9), (2.10) and (2.11), it remains to establish (5.4), (5.6) and (5.9). But these follow from the exponential sums estimates of [KS] (which extends the results of section VI of [Bo]); specifically we use Corollary 1 of [KS]. 

\bigskip
Firstly for an element $f$ in the function field of $E$ over $F$, we say that $f$ satisfies condition (A), if for any element $\mathcal{G}$ in the function field of $E$ over $\overline{F}$ (algebraic closure of $F$), we have $f \neq \mathcal{G}^p-\mathcal{G}$. 

\begin{proposition}(Corollary 1 of [KS]) Let $f$ be a nonconstant element in the function field of $E$ over $F$, satisfying condition (A), and $\mathcal{H} \subset E(F)$ be a subgroup. Then we have:
\begin{eqnarray}
\,\  \,\ \Big|\sum_{\substack{P \in \mathcal{H} \\f(P) \neq \infty}  }  \exp \Big(\frac{2 \pi \sqrt{-1}}{p}   Tr_{F/\mathbf{F}_p} \big(   f(P)        \big)     \Big) \Big| \leq 2 \deg(f) q^{1/2}.
\end{eqnarray}
In addition if the polar divisor of $f$ has support at a single prime divisor, then we have the stronger bound:
\begin{eqnarray}
 \,\ \,\ \,\ \,\ \,\ \,\ \Big|\sum_{\substack{P \in \mathcal{H} \\f(P) \neq \infty}  }  \exp \Big(\frac{2 \pi \sqrt{-1}}{p}   Tr_{F/\mathbf{F}_p} \big(   f(P)        \big)     \Big) \Big| \leq (1+ \deg(f)) q^{1/2}. 
\end{eqnarray}
\end{proposition}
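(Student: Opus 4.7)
The plan is to invoke the Weil bound for exponential sums on algebraic curves, applied to the Artin--Schreier cover of $E$ determined by $f$, and then to reduce the subgroup case to the full-group case via character orthogonality. Condition (A) on $f$ is precisely the condition that the Artin--Schreier cover $\mathcal{C}: z^p - z = f$ of $E$ is geometrically connected; equivalently, the $\ell$-adic Artin--Schreier sheaf $\mathcal{L}_\psi(f)$ on the open set $U := E \setminus \{\text{poles of } f\}$ is geometrically nontrivial.

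First I would handle the case $\mathcal{H} = E(F)$. By the Grothendieck--Lefschetz trace formula, the exponential sum equals $-\operatorname{Tr}(\operatorname{Frob}_q \mid H^1_c(U_{\overline{F}}, \mathcal{L}_\psi(f)))$, where the $H^0_c$ and $H^2_c$ contributions vanish because $\mathcal{L}_\psi(f)$ is geometrically nontrivial. Deligne's purity theorem (for curves, this is Weil's Riemann hypothesis) ensures that every Frobenius eigenvalue on $H^1_c$ has absolute value $q^{1/2}$, so the sum is bounded by $\dim H^1_c \cdot q^{1/2}$. An Euler--Poincar\'e calculation via Grothendieck--Ogg--Shafarevich, or equivalently Riemann--Hurwitz applied to the cover $\mathcal{C} \to E$, bounds this dimension by $2\deg(f)$ in general, which yields the principal estimate; when the polar divisor of $f$ is supported at a single prime divisor, the Swan conductor contribution is concentrated and an arithmetic genus bookkeeping gives the sharper bound $1 + \deg(f)$, yielding the second inequality.

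For a general subgroup $\mathcal{H} \subsetneq E(F)$, I would decompose the indicator function via Pontryagin duality as
\[
\mathbf{1}_{\mathcal{H}}(P) = \frac{1}{[E(F) : \mathcal{H}]} \sum_{\chi} \chi(P),
\]
where $\chi$ ranges over the characters of $E(F)$ trivial on $\mathcal{H}$. Each such $\chi$ gives rise to a rank-one lisse sheaf $\mathcal{K}_\chi$ on $E$, and the twisted sum is $-\operatorname{Tr}(\operatorname{Frob}_q \mid H^1_c(U_{\overline{F}}, \mathcal{L}_\psi(f) \otimes \mathcal{K}_\chi))$. The main obstacle lies here: a termwise application of Deligne's bound, followed by resummation over $\chi$, would lose a factor of $[E(F) : \mathcal{H}]$ and destroy the estimate. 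The content of Corollary 1 of [KS], extending section VI of [Bo], is precisely to circumvent this loss by repackaging the family of twisted sums as a single exponential sum on an auxiliary curve obtained from the isogeny dual to the saturation of $\mathcal{H}$ inside $E$; condition (A) is preserved under this pullback, and the Riemann--Hurwitz / Swan bookkeeping on the auxiliary curve still controls the genus by $\deg(f)$ up to the same universal constants. A single application of the Weil bound to the auxiliary curve then yields $2\deg(f)\, q^{1/2}$, with the refinement to $(1 + \deg(f))\, q^{1/2}$ in the single-pole case following verbatim.
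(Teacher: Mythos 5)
The paper does not actually prove this proposition; it is imported verbatim as Corollary~1 of [KS], so there is no internal argument to compare against. What you have written is a plausible sketch of how such a result is proved, and the first half is fine: condition (A) is indeed equivalent to geometric nontriviality of the Artin--Schreier sheaf $\mathcal{L}_\psi(f)$, the Grothendieck--Lefschetz trace formula reduces the full sum over $E(F)$ to a trace on $H^1_c$, Deligne's purity bounds the eigenvalues by $q^{1/2}$, and Grothendieck--Ogg--Shafarevich (equivalently Riemann--Hurwitz for the Artin--Schreier cover) controls $\dim H^1_c$ in terms of $\deg(f)$.

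However, the treatment of the subgroup case contains a genuine error that then forces you into an ill-defined detour. You assert that a termwise application of the Weil/Deligne bound after the character decomposition
\[
\mathbf{1}_{\mathcal{H}}(P) = \frac{1}{[E(F):\mathcal{H}]}\sum_{\chi}\chi(P)
\]
would ``lose a factor of $[E(F):\mathcal{H}]$''. It does not. The prefactor $1/[E(F):\mathcal{H}]$ exactly cancels the number $[E(F):\mathcal{H}]$ of characters $\chi$ trivial on $\mathcal{H}$, so all you need is a bound for each twisted sum that is \emph{uniform in $\chi$}. And that uniformity is automatic: each $\mathcal{K}_\chi$ is obtained by pushing out the Lang torsor $P\mapsto \mathrm{Frob}(P)-P$ along $\chi$, hence is lisse and everywhere unramified on $E$; tensoring $\mathcal{L}_\psi(f)$ with such a sheaf does not change the Swan conductors at the poles of $f$, so $\dim H^1_c(U_{\overline F},\mathcal{L}_\psi(f)\otimes\mathcal{K}_\chi)$ is independent of $\chi$, and $H^0_c=H^2_c=0$ because the tensor product is still wildly ramified, hence geometrically nontrivial. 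The direct character decomposition therefore yields exactly the stated $2\deg(f)\,q^{1/2}$ (and $(1+\deg(f))\,q^{1/2}$ in the single-pole case) with no loss. Because you believed the naive decomposition failed, you appeal to ``an auxiliary curve obtained from the isogeny dual to the saturation of $\mathcal{H}$ inside $E$'' -- but ``saturation of $\mathcal{H}$'' and ``the isogeny dual to it'' are not defined here, there is no reason a general subgroup $\mathcal{H}\subset E(F)$ arises as the $F$-rational image of an isogeny, and the claimed Riemann--Hurwitz bookkeeping on this unnamed curve is asserted rather than carried out. As written, that paragraph is not a proof; the correct and simpler route is the character decomposition you dismissed.
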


To apply Proposition 5.2 in our present context, we take $\mathcal{H}$ to be the cyclic subgroup of $E(F)$ generated by the point $Q$; thus $\mathcal{H} = \{ [n](Q), n=0,1,\cdots,t-1 \}$. For any point $P \in E(F)$, we denote by $\tau_{P} : E \rightarrow E$ the translation by $P$ map on $E$ (thus $\tau_P(R) = R+P$ for any point $R$ on $E$); we have $\tau_P$ is an automorphism of $E$ (as a genus one curve) over $F$.

\bigskip

To establish (5.4), we put for $\mathbf{k} \in \Delta_{2r}^{*}$:
\[
f_{\mathbf{k}} = \eta_{\mathbf{k}} \cdot x + \overline{\eta}_{\mathbf{k}} \cdot y
\]
The polar divisor of $x$ (respectively $y$) is supported at $\mathbf{O}$ with multiplicity $2$ (respectively $3$). Since at least one of the two elements $\eta_{\mathbf{k}},\overline{\eta}_{\mathbf{k}}$ of $F$ is non-zero, it follows that $f_{\mathbf{k}}$ is nonconstant, whose polar divisor is again supported at $\mathbf{O}$, with multiplicity $\leq 3$. Thus in particular $\deg (f_{\mathbf{k}}) \leq 3$. Put:
\[
g_{\mathbf{k}}= f_{\mathbf{k}} \circ \tau_{P_0}
\] 
(so in particular $g_{\mathbf{k}}([n](Q))  = f_{\mathbf{k}}(P_n)$). Then $g_{\mathbf{k}}$ is nonconstant, the polar divisor of $g_{\mathbf{k}}$ is supported at $-P_0$, with multiplicity at most $3$, and $\deg(g_{\mathbf{k}}) = \deg(f_{\mathbf{k}}) \leq 3$. Note also that the condition $g_{\mathbf{k}}([n](Q)) \neq \infty$ is exactly the condition $P_n \neq \mathbf{O}$. Thus the exponential sum occurring on the left hand side of (5.4) is exactly equal to:
\[
\sum_{\substack{P \in \mathcal{H} \\ g_{\mathbf{k}}(P) \neq \infty}  }  \exp \Big(\frac{2 \pi \sqrt{-1}}{p}   Tr_{F/\mathbf{F}_p} \big(   g _{\mathbf{k}}(P)        \big)     \Big)
\]
and hence the estimate (5.4) follow from the bound (5.11) of Proposition 5.2, applied to $f= g_{\mathbf{k}}$, if we can show that $g_{\mathbf{k}}$ satisfies condition (A). Indeed if we were to have $g_{\mathbf{k}}= \mathcal{G}^p-\mathcal{G}$ for some $\mathcal{G}$ in the function field of $E$ over $\overline{F}$, then (working over $\overline{F}$) we see that $\mathcal{G}$ must be nonconstant, whose polar divisor coincides with that of $g_{\mathbf{k}}$, and hence is supported at the point $-P_{0}$. By the Riemann-Roch Theorem for elliptic curves, the multiplicity of the pole of $\mathcal{G}$ at $-P_0$ is then at least $2$, and thus the multiplicity of the pole of $\mathcal{G}^p - \mathcal{G}$ at $-P_0$ is at least $2p \geq 4$. But we have already seen that the multiplicity of $g_{\mathbf{k}}$ at $-P_{0}$ is at most $3$; thus $g_{\mathbf{k}}$ satisfies condition (A).
 
\bigskip
To establish (5.6), we put for $\mathbf{k} \in \Delta_{2rs}^{*}$ and $0 \leq \iota \leq s-1$:
\begin{eqnarray*}
f^{(\iota)}_{\mathbf{k}} &=& \eta^{(\iota)}_{\mathbf{k}} \cdot x + \overline{\eta}^{(\iota)}_{\mathbf{k}} \cdot y \\
g^{(\iota)}_{\mathbf{k}}&=&  f^{(\iota)}_{\mathbf{k}} \circ \tau_{P_{\iota}}
\end{eqnarray*}
then similarly, either $f^{(\iota)}_{\mathbf{k}}$ is identically equal to zero (which happens only when both $\eta^{(\iota)}_{\mathbf{k}}$ and $\overline{\eta}^{(\iota)}_{\mathbf{k}}$ are zero), or else is nonconstant, whose polar divisor is supported at $\mathbf{O}$, with $\deg(f_{\mathbf{k}}^{(\iota)}) \leq 3$. In addition since at least one of the elements in the set $\{\eta^{(\iota)}_{\mathbf{k}},  \overline{\eta}^{(\iota)}_{\mathbf{k}}\}_{0 \leq \iota \leq s-1}$ is non-zero, we have $f^{(\iota)}_{\mathbf{k}}$ is not identically zero (and hence nonconstant) for some $0 \leq \iota \leq s-1$.

\bigskip

Thus it also follows that either $g^{(\iota)}_{\mathbf{k}}$ is identically equal to zero (which happens only when both $\eta^{(\iota)}_{\mathbf{k}}$ and $\overline{\eta}^{(\iota)}_{\mathbf{k}}$ are zero), or else is nonconstant, whose polar divisor is supported at $-P_{\iota}$, with $\deg(g_{\mathbf{k}}^{(\iota)}) \leq 3$. In addition we have $g^{(\iota)}_{\mathbf{k}}$ is not identically zero (and hence nonconstant) for some $0 \leq \iota \leq s-1$. Put:
\[
\mathfrak{g}_{\mathbf{k}} = \sum_{\iota=0}^{s-1} g^{(\iota)}_{\mathbf{k}}
\]
(so in particular $\mathfrak{g}_{\mathbf{k}} ([n](Q)) =   \sum^{s-1}_{\iota=0} g^{(\iota)}_{\mathbf{k}}([n](Q))  =\sum^{s-1}_{\iota=0} f_{\mathbf{k}}^{(\iota)}(P_{n+\iota})$). Noting that the points $-P_0,-P_1,\cdots,-P_{s-1}$, are all distinct, it then follows that $\mathfrak{g}_{\mathbf{k}}$ is nonconstant, whose polar divisor is supported among the points $\{-P_{\iota}\}_{0 \leq \iota \leq s-1}$, and the multiplicity of pole at each point $-P_{\iota}$ is $\leq 3$. In particular we have $\deg(\mathfrak{g}_{\mathbf{k}}) \leq 3s$. Note also that the condition $\mathfrak{g}_{\mathbf{k}}([n](Q)) \neq \infty$ is exactly the condition $P_{n},P_{n+1},\cdots,P_{n+s-1} \neq \mathbf{O}$. Thus the exponential sum occurring on the left hand side of (5.6) is exactly equal to:
\[
\sum_{\substack{P \in \mathcal{H} \\ \mathfrak{g}_{\mathbf{k}}(P) \neq \infty}  }  \exp \Big(\frac{2 \pi \sqrt{-1}}{p}   Tr_{F/\mathbf{F}_p} \big(   \mathfrak{g} _{\mathbf{k}}(P)        \big)     \Big)
\]
and hence the estimate (5.6) follow from the bound (5.10) of Proposition 5.2, applied to $f= \mathfrak{g}_{\mathbf{k}}$, if we can show that $\mathfrak{g}_{\mathbf{k}}$ satisfies condition (A) under the condition $p \geq 5$. Indeed if we were to have $\mathfrak{g}_k = \mathcal{G}^p - \mathcal{G}$ for some element $\mathcal{G}$ in the function field of $E$ over $\overline{F}$, then (working over $\overline{F}$) we see similarly that $\mathcal{G}$ is nonconstant, whose polar divisor coincides with that of $\mathfrak{g}_{\mathbf{k}}$, and then the multiplicity of each pole of $\mathcal{G}^p - \mathcal{G}$ at least $p$. On the other hand the multiplicity of each pole of $\mathfrak{g}_{\mathbf{k}}$ is at most $3$. It follows that $\mathfrak{g}_{\mathbf{k}}$ satisfies condition (A) when $p \geq 5$.

\bigskip

Finally to establish (5.9), we put for $\mathbf{k} \in \Delta_{2rs}^{*}$:
\[
\mathfrak{h}_{\mathbf{k}} = \mathfrak{g}_{\mathbf{k}} \circ [s]
\]
(so in particular $\mathfrak{h}_{\mathbf{k}}([n](Q)) =\mathfrak{g}_{\mathbf{k}}([ns](Q))$). By Chapter III, Theorem 6.2(d) of [Si], the multiplication by $s$ map $[s]$ is an isogeny from $E$ to $E$ over $F$, and $\deg([s]) =s^2$. It follows that $\mathfrak{h}_{\mathbf{k}}$ is also nonconstant, and
\[
\deg(\mathfrak{h}_{\mathbf{k}}) = \deg(\mathfrak{g}_{\mathbf{k}}) \cdot \deg([s]) = \deg(\mathfrak{g}_{\mathbf{k}}) \cdot s^2 \leq 3s^3.
\]  
In addition we can again show that $\mathfrak{h}_{\mathbf{k}}$ satisfies condition (A) when $p \geq 5$ and $\gcd(s,p)=1$; see below.

\bigskip

Now apply the bound (5.10) of Proposition 5.2 to $\mathfrak{h}_{\mathbf{k}}$, we then obtain:
\begin{eqnarray}
\,\  \,\ \,\  \,\  \,\ \,\   \,\ \Big|\sum_{\substack{ n=0,1,\cdots,t-1 \\\mathfrak{g}_{\mathbf{k}}([ns](Q)) \neq \infty}  }  \exp \Big(\frac{2 \pi \sqrt{-1}}{p}   Tr_{F/\mathbf{F}_p} \big(   \mathfrak{g}_{\mathbf{k}}( [ns](Q))        \big)     \Big) \Big| \leq  6 q^{1/2}s^3. 
\end{eqnarray}
Noting that as a function of $n$, the term $\mathfrak{g}_{\mathbf{k}}([ns](Q)) $ is periodic modulo $t/\gcd(s,t)$, we see that the exponential sum occurring on the left hand side of (5.12) is equal to:
\begin{eqnarray}
 \gcd(s,t) \cdot \sum_{\substack{ n=0,1,\cdots, \frac{t}{\gcd(s,t)}-1 \\\mathfrak{g}_{\mathbf{k}}([ns](Q)) \neq \infty}  }  \exp \Big(\frac{2 \pi \sqrt{-1}}{p}   Tr_{F/\mathbf{F}_p} \big(   \mathfrak{g}_{\mathbf{k}}( [ns](Q))        \big)     \Big) . \nonumber
\end{eqnarray}
In addition, the condition $\mathfrak{g}_{\mathbf{k}}([ns](Q)) \neq \infty$ is exactly the condition $P_{ns},P_{ns+1},\cdots,P_{ns+s-1} \neq \mathbf{O}$. So (5.12) can be rewritten as:
\begin{eqnarray}
    \Big|    &\sum_{ \substack{ n =0,1,\cdots,\frac{t}{\gcd(s,t)}-1 \\ P_{ns},P_{ns+1},\cdots,P_{ns+s-1} \neq \mathbf{O}}} & \exp \Big(\frac{2 \pi \sqrt{-1}}{p} Tr_{F/\mathbf{F}_p}    \big(  \mathfrak{g}_{\mathbf{k}}([ns](Q))  \big)    \Big)       \Big|   \nonumber \\        &\leq &   6 q^{1/2} \frac{s^3}{\gcd(s,t)}    \nonumber
  \end{eqnarray}
But this is exactly (5.9).

\bigskip
It remains to show that $\mathfrak{h}_{\mathbf{k}}$ satisfies condition (A) when $p \geq 5$ and $\gcd(s,p)=1$. Working over $\overline{F}$, first note that as $\gcd(s,p)=1$, we have that the isogeny $[s]$ is unramified (Chapter III, Corollary 5.4 of [Si]). Thus as the multiplicity of each pole of $\mathfrak{g}_{\mathbf{k}}$ is at most $3$, it follows that the multiplicity of each pole of $ \mathfrak{h}_{\mathbf{k}} =\mathfrak{g}_{\mathbf{k}} \circ [s ]$ is also at most $3$. Now if we were to have $\mathfrak{h}_{\mathbf{k}} =\mathcal{G}^p -\mathcal{G}$ for some element $\mathcal{G}$ in the function field of $E$ over $\overline{F}$, then again $\mathcal{G}$ is nonconstant, whose polar divisor coincides with that of $\mathfrak{h}_{\mathbf{k}}$, and then the multiplicity of each pole of $\mathcal{G}^p - \mathcal{G}$ is at least $p$. It again follows that $\mathfrak{h}_{\mathbf{k}}$ satisfies condition (A) when $p \geq 5$ and $\gcd(s,p)=1$.

\bigskip
This completes the proofs of (2.9), (2.10) and (2.11).

\end{document}